\newtheorem{thm}{Theorem}[section]
\newtheorem{lem}[thm]{Lemma}
\newtheorem{cor}[thm]{Corollary}
\newtheorem{conj}[thm]{Conjecture}
\theoremstyle{definition}			                						
\newtheorem{mydef}[thm]{Definition}
\newtheorem{rem}[thm]{Remark}
\title{Properties of the cone of polynomials of fixed degree that preserve nonnegative matrices}
\author{Jared Brannan \and Benjamin J.~Clark \and Garrett Kepler}
\date{August 2024}
\begin{document}

\maketitle

\begin{abstract}
As was detailed by Loewy and London in [Linear and Multilinear Algebra 6 (1978/79), no.~1, 83--90], the cone of polynomials that preserve the nonnegativity of matrices may play an important role in the solution to the nonnegative inverse eigenvalue problem. In this paper, we start by showing the cone generated by polynomials of degree greater than or equal to $2n$ that preserve nonnegative matrices of order $n$ is non-polyhedral. Next, a question posed by Loewy in [Linear Algebra and its Applications, 676(2023), 267--276], about how negative the center term can be in a degree $2n$ polynomial is answered. We extend this to show that a polynomial that preserves nonnegative matrices of order $n$ can have it's the largest term, in absolute value, be arbitrarily negative with the remaining coefficients being one. We conclude, by exploring properties of the measure of the cone when restricted to the unit sphere and by proving some initial bounds of that volume.
\end{abstract}

\section{Introduction}

Motivated by the nonnegative inverse eigenvalue problem (NIEP), Loewy and London in \cite{loewy1978} posed the problem of characterizing the polynomials that preserve nonnegative matrices of a fixed order. Finding a characterization has been shown to provide additional necessary conditions for the NIEP. See \cite{johnson2018} for a survey on the NIEP and \cite{loewy2023} for more information on how characterizing the polynomials that preserve nonnegative matrices gives necessary conditions for the NIEP.

We denote $\mathcal{P}_n$ to be the set of polynomials that preserve nonnegative matrices of order $n$. Clark and Paparella in \cite{clark2022} gave general results for the coefficients of polynomials in $\mathcal{P}_n$ and in \cite{clark2021} gave a characterization for $\mathcal{P}_2$. Bharali and Holtz in \cite{bharali2008} considered a larger class of entire functions that preserve nonnegative matrices of a fixed order. Loewy in \cite{loewy2023} gave a proof of a conjecture of Clark and Paparella that $\mathcal{P}_{n+1}$ is a strict subset of $\mathcal{P}_n$ and showed that if we restrict the degree of polynomials in $\mathcal{P}_n$ to some $m \in \mathbb{Z}^+$, denoted $\mathcal{P}_{n,m}$, then $\mathcal{P}_{2,m}$ is non-polyhedral for $m \geq 4$. We extend this result to show that $\mathcal{P}_{n,m}$ is non-polyhedral for all $n \geq 2$ and $m \geq 2n$.

Next, we answer \cite[Question 2.1]{loewy2023} by showing that the optimal bound is $2$ for any given $n$. With this, we extend the result to give a new set of polynomials in $\mathcal{P}_n$ where the center term can be made arbitrarily negative given enough positive terms on the edge.

We conclude by considering the volume the cone $\mathcal{P}_{n,m}$ takes when restricted to the unit sphere. We conjecture that as $m$ goes to infinity, the volume of the cone goes to $0$. Some inequalities are given for how the volume changes as we vary $m$ and $n$ for $\mathcal{P}_{n,m}$.

\section{Background and notation}

The set $\mathsf{M}_n$ is used for $n$ by $n$ real square matrices. The set restrictions $\mathsf{M}_n^{\geq 0}$ ($\mathsf{M}_n^{+})$ will be used to denote the $n$ by $n$ entry-wise nonnegative (positive) matrices.

The set $\mathcal{P}_n = \{ p \in \mathbb{R}[x] : p(A) \geq 0, \forall A \in \mathsf{M}_n^{\geq 0}\}$ is the set of polynomials that preserve the nonnegativity of matrices of a fixed order and is the main object of study. It is shown in \cite[Lemma 6]{clark2021} that $\mathcal{P}_n$ is equivalent to $\{ p \in \mathbb{R}[x] : p(A) \geq 0, \forall A \in \mathsf{M}_n^{+}\}$, that is we need only consider positive matrices. When we restrict the degree of polynomials in $\mathcal{P}_n$ we will use the notation of \cite{loewy2023}, which is $\mathcal{P}_{n,m} = \{ p \in \mathcal{P}_n : \text{deg}(p) \leq m \}$.

For the convex analysis dealing with the cone behaviors of $\mathcal{P}_n$ and $\mathcal{P}_{n,m}$ we will be following the notation of \cite{rockafellar1997convex}. Let $S$ be a set of values where scalar multiplication and addition is defined, then 
\[
\text{cone}(S) = \left\{ \sum_{i=1}^k \alpha_i x_i : x_i \in S, \alpha_i \in \mathbb{R}^{\geq 0}, k \in \mathbb{N} \right\}
\] 
denotes the conical hull of the elements of $S$. When working with the elements of $S$ we will call them the conical generating elements.

Let $p \in \mathbb{R}[x]$, then denote 
\[
\mathcal{Z}_{i,j}(p) := \{ A \in \mathsf{M}^+_n : (p(A))_{i,j} = 0 \}.
\]
Define $\mathcal{Z}(p)$ to be the union of $\mathcal{Z}_{i,j}(p)$ for all $i,j \in \{1, 2, \dots, n\}$.

\begin{lem} \label{lem:positive_stochastic}
    Let $p \in \mathbb{R}[t]$, then $p \in \mathcal{P}_n$ if and only if $p(\rho A) \geq 0$ for all $A \in M_n$ where $A$ is a positive stochastic matrix and $\rho \in \mathbb{R}^+$.
\end{lem}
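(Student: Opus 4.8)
The plan is to treat the two directions quite asymmetrically. The forward implication is immediate: if $p \in \mathcal{P}_n$, then for any positive stochastic $A$ and any $\rho \in \mathbb{R}^+$ the matrix $\rho A$ is entrywise nonnegative, hence lies in $\mathsf{M}_n^{\geq 0}$, so $p(\rho A) \geq 0$ by the definition of $\mathcal{P}_n$. No work is required here.

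For the converse, the idea is to reduce an arbitrary positive matrix to a positive scalar multiple of a positive stochastic matrix by a diagonal similarity. Two standing facts drive the argument. First, by \cite[Lemma 6]{clark2021} it suffices to verify $p(B) \geq 0$ for every $B \in \mathsf{M}_n^+$. Second, if $D$ is a positive diagonal matrix then $p(DBD^{-1}) = D\,p(B)\,D^{-1}$, and conjugation by a positive diagonal matrix preserves entrywise nonnegativity; consequently $p(B) \geq 0$ if and only if $p(DBD^{-1}) \geq 0$.

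The central step is to produce the scaling. Given $B \in \mathsf{M}_n^+$, the Perron--Frobenius theorem furnishes a strictly positive right eigenvector $v$ of $B$ with eigenvalue $\rho := \rho(B) > 0$. Put $D := \diag(v)$ and consider the row sums of $D^{-1}BD$: the $i$th row sum equals $v_i^{-1}\sum_{j} B_{ij} v_j = v_i^{-1}(\rho v_i) = \rho$. Hence $A := \rho^{-1} D^{-1} B D$ is entrywise positive with every row summing to $1$, i.e.\ a positive stochastic matrix, and $B = D(\rho A)D^{-1}$. Now $p(B) = p\!\left(D(\rho A)D^{-1}\right) = D\,p(\rho A)\,D^{-1}$; by hypothesis $p(\rho A) \geq 0$, and since $D$ is positive diagonal this gives $p(B) \geq 0$. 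As $B \in \mathsf{M}_n^+$ was arbitrary, the first standing fact yields $p \in \mathcal{P}_n$.

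The only point needing care is the existence of the diagonal similarity that simultaneously equalizes the row sums and keeps all entries positive; this is exactly what the Perron--Frobenius eigenvector provides, so once one thinks to use it the remaining computation is routine. Everything else is bookkeeping with the similarity identity $p(DBD^{-1}) = D\,p(B)\,D^{-1}$ and the reduction to positive matrices.
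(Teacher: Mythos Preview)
Your proof is correct and follows essentially the same approach as the paper's: both reduce to positive matrices via \cite[Lemma~6]{clark2021} and then use a diagonal similarity to turn an arbitrary positive matrix into a positive scalar multiple of a stochastic matrix. The paper simply cites this last fact together with \cite[Lemma~1]{clark2021}, whereas you spell out the Perron--Frobenius construction of the diagonal conjugator and the similarity identity $p(DBD^{-1}) = D\,p(B)\,D^{-1}$ explicitly.
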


\begin{proof}
    This follows directly from \cite[Lemma 1]{clark2021}, \cite[Lemma 6]{clark2021}, and noting that all positive matrices can be turned into a row stochastic matrix by a diagonal similarity.
\end{proof}

We are following the definition and construction of Hausdorff measure from Evans and Gariepy, \cite[Definition 2.1]{evans2015}.

\begin{mydef}
    Let $A \subseteq \mathbb{R}^n$, then $\mathcal{H}^s$ denotes the $s$-dimensional Hausdorff measure on $\mathbb{R}^n$ and $\mathcal{H}^s(A)$ denotes the $s$-dimensional Hausdorff measure of the set $A$.
\end{mydef}

Denote $\overline{B}_k(0,1)$ to be the closed $k$ dimensional unit sphere centered at $0$.

\section{Cone of polynomials is non-polyhedral for sufficient degree}

\begin{mydef}
    Let $p,q \in \mathbb{R}[x]$ and $t \in [0,1]$, then define 
    \[
    g_{p,q,t}(x) := (1-t)p(x) + tq(x)
    \]
    to be the polynomial made from the convex combination of $p$ and $q$.
\end{mydef}

\begin{lem} \label{lem:extremal_face}
    Let $X = \{x, x^2, \dots, x^{2n}\}$ and $P = \{p_i\}_{i=1}^m \subset \mathcal{P}_{n,2n}$ where 
    \[
    p_i(x) = \sum_{\substack{k=0 \\ k \not = n}}^{2n} a_{i,k} x^k - x^n.
    \]
    For $t \in (0,1)$, $p \in P$, and $q \in X$, the ray through $g_{p,q,t}$ is not an exposed ray in $\mathcal{P}_{n,2n}$.
\end{lem}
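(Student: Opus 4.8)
The plan is to argue straight from the definition of an exposed ray. Identifying a real polynomial of degree at most $2n$ with its coefficient vector in $\mathbb{R}^{2n+1}$, the set $\mathcal{P}_{n,2n}$ is the closed convex cone obtained by intersecting, over all $A \in \mathsf{M}_n^{\geq 0}$ and all indices $i,j$, the closed half-spaces $\{p : (p(A))_{i,j} \geq 0\}$, each of which is linear in the coefficients of $p$. So a ray $R$ of $\mathcal{P}_{n,2n}$ is \emph{exposed} exactly when there is a linear functional $\ell$ with $\ell \geq 0$ on $\mathcal{P}_{n,2n}$ whose kernel meets the cone in precisely $R$. The single observation that drives the proof is that $g_{p,q,t}$ is, by construction, a \emph{strict} convex combination of two elements of $\mathcal{P}_{n,2n}$: indeed $p \in P \subset \mathcal{P}_{n,2n}$ by hypothesis, and each $q = x^k \in X$ lies in $\mathcal{P}_{n,2n}$ because $A \mapsto A^k$ preserves entrywise nonnegativity and $k \leq 2n$.

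Before the main step I would dispose of two elementary points. First, $g_{p,q,t} \neq 0$: otherwise $p = -\tfrac{t}{1-t}\,x^k$, which is impossible because a negative scalar multiple of $x^k$ sends the identity matrix to a matrix with negative diagonal entries and hence lies outside $\mathcal{P}_n$; thus the ray $R$ through $g_{p,q,t}$ is genuinely one-dimensional. Second, $p$ is not a scalar multiple of $q$: if $k \neq n$ this is immediate because the coefficient of $x^n$ in $p$ is $-1$ while that of $x^k$ is $0$, and if $k = n$ then proportionality would force $p = -x^n$, which again is excluded since $-x^n \notin \mathcal{P}_n$.

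For the main step, suppose $\ell$ is any linear functional with $\ell \geq 0$ on $\mathcal{P}_{n,2n}$ and $\ell(g_{p,q,t}) = 0$. From $0 = \ell(g_{p,q,t}) = (1-t)\ell(p) + t\,\ell(q)$ with $t, 1-t > 0$ and $\ell(p), \ell(q) \geq 0$ we conclude $\ell(p) = \ell(q) = 0$. Hence the exposed face $F = \mathcal{P}_{n,2n} \cap \ker \ell$ contains both $p$ and $q$, and therefore, being closed under nonnegative linear combinations, contains the cone $\{\alpha p + \beta q : \alpha, \beta \geq 0\}$, which by the second point above is two-dimensional and so strictly contains $R$. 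Thus no supporting hyperplane passing through $g_{p,q,t}$ can cut $\mathcal{P}_{n,2n}$ down to exactly $R$, which is to say $R$ is not an exposed ray.

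I do not anticipate a genuine obstacle here: the argument is essentially the general principle that a point in the relative interior of a segment joining two rays of a convex cone cannot lie on an exposed ray. The only care required is in the two bookkeeping facts above and in confirming that $\mathcal{P}_{n,2n}$ really is a closed convex cone, so that the standard theory of exposed faces from \cite{rockafellar1997convex} applies; alternatively, one could run the same argument through the zero sets $\mathcal{Z}_{i,j}$, observing that any $A$ with $(g_{p,q,t}(A))_{i,j} = 0$ forces $(p(A))_{i,j} = (q(A))_{i,j} = 0$ by the same nonnegativity-plus-strict-combination reasoning.
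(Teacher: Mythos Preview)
Your argument is correct and in fact cleaner than the paper's. You work directly from the definition: since $g_{p,q,t}$ is by construction a strict convex combination of the two non-proportional elements $p,q\in\mathcal{P}_{n,2n}$, any supporting functional vanishing on $g_{p,q,t}$ must vanish on both $p$ and $q$, so the exposed face it cuts out is at least two-dimensional. The paper takes a different route: it first argues that $\mathcal{Z}(g_{p,q,t})=\emptyset$ (because $\mathcal{Z}(q)=\emptyset$ for any monomial $q$ on positive matrices), and then uses a continuity/perturbation step to build an auxiliary $h\in\mathcal{P}_{n,2n}$ so that $g_{p,q,t}$ is a convex combination of $p,q,h$. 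Your approach bypasses the construction of $h$ entirely---the endpoints $p$ and $q$ already witness non-extremality---and in particular avoids having to justify a uniform $\epsilon$ over the noncompact family $\mathsf{M}_n^{\geq 0}$. What the paper's argument gains is the stronger intermediate fact $\mathcal{Z}(g_{p,q,t})=\emptyset$; for the lemma as stated, though, your two bookkeeping checks ($g_{p,q,t}\neq 0$ and $p\not\parallel q$) together with the supporting-functional argument are all that is needed. Your closing remark about running the same reasoning through the zero sets $\mathcal{Z}_{i,j}$ is essentially the paper's viewpoint.
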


\begin{proof}
Loewy in \cite[Theorem 2.1]{loewy2023} showed that $P$ is nonempty. Let $p \in P$ and $q \in X$, then $\mathcal{Z}(g_{p,q,t}) = \emptyset$ for all $t \in (0,1)$ following $\mathcal{Z}(q) = \emptyset$. For a specific $t \in (0,1)$ we can find an $\epsilon > 0$ such that
\[
h(A) = g_{p,q,t}(A) - \epsilon \sum_{\substack{k=0 \\ k \not = n}}^{2n} a_{i,k} A^k \geq 0
\]
for all $A \in \mathsf{M}_{n}^{\geq 0}$ following $x^k$ is continuous over matrices for all $k \in \{0, \dots, 2n\}$. Thus the ray through $g_{p,q,t}$ is not an exposed ray following $g_{p,q,t}$ can be expressed as a convex combination of $p,q,h$.
\end{proof}

\begin{thm} \label{thm:P_2n_not_polyhedral}
    Let $n \in \mathbb{Z}^+$, then $\mathcal{P}_{n,2n}$ is not a polyhedral cone.
\end{thm}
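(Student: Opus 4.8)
The plan is to show that $\mathcal{P}_{n,2n}$, while being a closed convex cone (with a compact cross-section, since the normalization $\sum |a_k| = 1$ together with the constraint $p(1) \geq 0$ forces boundedness once we work modulo the lineality—actually the cone is pointed because $p(1) = \sum a_k$ must be $\geq 0$ for all $p \in \mathcal{P}_n$, and both $p$ and $-p$ lying in the cone would force $p(1)=0$ and then a scaling argument on positive diagonal-dominant matrices kills $p$), is not finitely generated, hence not polyhedral. The key tool is Lemma~\ref{lem:extremal_face}: a polyhedral pointed cone has only finitely many extreme rays, and every extreme ray of a closed pointed cone in finite dimensions is an exposed ray in the sense that it is the intersection of the cone with a supporting hyperplane—more precisely, for a \emph{polyhedral} cone the extreme rays coincide exactly with the exposed rays and there are finitely many. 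So it suffices to exhibit infinitely many distinct exposed rays, or to produce a continuum of rays none of which is exposed but which must contain extreme points in any finite generating set, contradicting finiteness.

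First I would fix, via \cite[Theorem 2.1]{loewy2023}, a single polynomial $p(x) = \sum_{k \neq n} a_k x^k - x^n \in \mathcal{P}_{n,2n}$ with nonnegative edge coefficients and strictly negative center coefficient $-1$. Then for each $t \in (0,1)$ and each $q = x^j \in X$, the polynomial $g_{p,q,t}$ lies in $\mathcal{P}_{n,2n}$ (it is a convex combination of elements of the cone, up to positive scaling), and by Lemma~\ref{lem:extremal_face} the ray through $g_{p,q,t}$ is \emph{not} exposed. Next I would argue that if $\mathcal{P}_{n,2n}$ were polyhedral, it would equal the conical hull of its finitely many extreme rays, and in a polyhedral cone every extreme ray \emph{is} exposed (a standard fact: polyhedral cones are facially exposed). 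Now consider the two-dimensional cone $C = \operatorname{cone}\{p, x^j\}$ for a fixed $j \neq n$; this is a planar cone whose relative boundary consists of exactly the two rays through $p$ and through $x^j$, and its relative interior is swept out by the non-exposed rays $g_{p,x^j,t}$, $t \in (0,1)$. Since $C \subseteq \mathcal{P}_{n,2n}$, any point in the relative interior of $C$ is a convex combination involving $p$ and $x^j$, so it cannot be extreme in $\mathcal{P}_{n,2n}$ either—but that is consistent with polyhedrality and is not yet the contradiction.

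The actual contradiction comes from a perturbation/limiting argument: I would instead show that the rays through $x^j$ themselves, $j = 1, \dots, 2n$, together with suitable ``boundary'' rays obtained as limits, force the relevant face of $\mathcal{P}_{n,2n}$ containing $p$ and all of $X$ to be a cone over a convex body with a non-polyhedral (smooth or curved) boundary. Concretely, fix $q = x^{j}$; the set of $s \geq 0$ for which $x^{j} + s\,\bigl(\text{(generic perturbation away from }p\text{)}\bigr)$ remains in $\mathcal{P}_{n,2n}$ is governed by the condition $\min_{A} \bigl(p_{\text{pert}}(A)\bigr)_{k,\ell} \geq 0$ over positive stochastic $A$ (Lemma~\ref{lem:positive_stochastic}), and this minimum is an infimum of infinitely many \emph{analytic but genuinely nonlinear} functions of the coefficients, producing a boundary with infinitely many distinct exposed points—each giving a distinct exposed ray of $\mathcal{P}_{n,2n}$. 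Having infinitely many exposed rays contradicts polyhedrality. The main obstacle, and the step requiring the most care, is making this last paragraph rigorous: one must exhibit an explicit one-parameter family of polynomials in $\mathcal{P}_{n,2n}$ whose rays are provably \emph{distinct} and provably \emph{exposed} (or provably extreme), rather than merely non-exposed as in Lemma~\ref{lem:extremal_face}. I expect the cleanest route is a dimension count: Lemma~\ref{lem:extremal_face} shows an entire relatively-open $2$-dimensional family of rays is non-exposed, so these rays contain no extreme rays of $\mathcal{P}_{n,2n}$; but a polyhedral pointed cone of dimension $d$ is the convex hull of finitely many extreme rays and its non-exposed rays lie in the relative interiors of its proper faces, which form a \emph{finite} union of lower-dimensional polyhedral cones—so the family $\{g_{p,x^j,t} : t \in (0,1),\ j \neq n\}$, being $2$-dimensional and entirely non-exposed, would have to lie inside finitely many proper faces, yet its linear span together with the span of $X$ is all of $\mathbb{R}^{2n}$ (the full dimension of the cone), which is the contradiction.
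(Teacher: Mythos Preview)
Your proposal assembles the right ingredients---Lemma~\ref{lem:extremal_face}, the fact that a polyhedral pointed cone has only finitely many extreme rays and that these coincide with its exposed rays, and the instinct that a perturbation is needed---but the closing ``dimension count'' does not yield a contradiction. You argue that the non-exposed rays $\{g_{p,x^j,t}:t\in(0,1),\,j\neq n\}$ would have to sit inside finitely many proper faces of a hypothetical polyhedral $\mathcal{P}_{n,2n}$, and that their linear span together with $X$ fills the ambient space, calling this ``the contradiction.'' It is not one: already for the cone over a square in $\mathbb{R}^3$, the four $2$-dimensional facets consist entirely of non-extreme rays and their union spans all of $\mathbb{R}^3$. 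Full linear span of the non-extreme locus is entirely compatible with polyhedrality, so the argument as written does not close. (Also, for fixed $p$ the family $\{g_{p,x^j,t}\}$ is a finite union of open segments, not a genuinely $2$-dimensional set of rays.)

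The paper's proof follows the perturbation route you sketch in your penultimate paragraph, but makes it concrete by going back to the \emph{proof} of Lemma~\ref{lem:extremal_face} rather than using only its statement. One assumes $\mathcal{P}_{n,2n}=\operatorname{cone}\bigl(P\cup\{1,x,\ldots,x^{2n}\}\bigr)$ with $P$ finite (the generators with $x^n$-coefficient normalized to $-1$), selects $p\in P$ and $q\in X$ so that the segment $[p,q]$ lies on a boundary face of this finitely generated cone, and then perturbs $g_{p,q,t}$ in the outward-normal direction $u$ of that face. The substance of Lemma~\ref{lem:extremal_face} is that $\mathcal{Z}(g_{p,q,t})=\emptyset$, so one can subtract $\epsilon\sum_{k\neq n}a_kx^k$ and remain in $\mathcal{P}_{n,2n}$; the resulting polynomial $h$ lies strictly on the outward side of the face of $\operatorname{cone}(P\cup X)$ through $[p,q]$ and hence is not in $\operatorname{cone}(P\cup X)$, contradicting the assumed equality. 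What your proposal is missing is exactly this step: naming the perturbation direction explicitly and explaining why the perturbed polynomial simultaneously stays in $\mathcal{P}_{n,2n}$ and exits the finitely generated cone.
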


\begin{proof}
    Assume for contradiction that $\mathcal{P}_{n,2n}$ is finitely generated, then $\mathcal{P}_{n,2n}$ is a superset of the nonnegative orthant, generated by $\{1,x,x^2, \dots, x^{2n}\}$. Clark and Paparella, in \cite[Corollary 4.2 and Theorem 4.11]{clark2022}, showed that the first and last $n$ terms of the polynomial must be nonnegative. However, Loewy showed in \cite[Corollary 2.2]{loewy2023} that the $x^n$ term can be negative. Without loss of generality, we can assume that the $n$th coefficient is equal to $-1$. Thus the remaining finite conical generating elements of $\mathcal{P}_{n,2n}$ are $P = \{p_i\}_{i=1}^m$ where
    \[
    p_i(x) = \sum_{\substack{k=0 \\ k \not = n}}^{2n} a_{i,k} x^k - x^n
    \]
    and with $a_{i,k} \geq 0$.

    Pick $p \in P$ and $q \in X = \{x,x^2, \dots, x^{2n}\}$ such that the convex combination of them is on the boundary of $\text{cone}(P \cup Q)$. By Lemma \ref{lem:extremal_face} for some $t\in (0,1)$ we have that $g_{p,q,t}(x)$ is not an exposed ray. Let $u(x)$ be the normal polynomial with respect to the face through $\{g_{p,q,t}: t\in [0,1]\}$. Perturb the coefficients of $g_{p,q,t}$ in the direction of $u$ so that it is exposed ray, call this new polynomial $h$. By construction $h$ is not in $\text{cone}(P \cup Q)$. Thus the ray through $h$ is a new exposed ray of $\mathcal{P}_{n,2n}$ which contradicts $P$ and $\{1,x,x^2, \dots, x^{2n}\}$ generating $\mathcal{P}_{n,2n}$.
\end{proof}

\begin{cor}
    Let $n,m \in \mathbb{Z}^+$ such that $m > 2n$, then $\mathcal{P}_{n,m}$ is not a polyhedral cone.
\end{cor}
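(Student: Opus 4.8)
The plan is to realize $\mathcal{P}_{n,2n}$ as the intersection of $\mathcal{P}_{n,m}$ with a linear subspace and then use the fact that such intersections preserve polyhedrality, together with Theorem \ref{thm:P_2n_not_polyhedral}. Concretely, regard each $\mathcal{P}_{n,k}$ as a convex cone inside the finite-dimensional vector space of real polynomials of degree at most $k$. Since $2n < m$, the space of polynomials of degree at most $2n$, call it $V$, sits as a linear subspace inside the space of polynomials of degree at most $m$, and for any $p$ with $\deg p \leq 2n$ the conditions ``$p \in \mathcal{P}_{n,m}$'' and ``$p \in \mathcal{P}_{n,2n}$'' coincide: both reduce to $p \in \mathcal{P}_n$, because the degree bound $m$ is automatically satisfied. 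Hence $\mathcal{P}_{n,2n} = \mathcal{P}_{n,m} \cap V$.

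Next I would invoke the Farkas--Minkowski--Weyl theorem: a convex cone is polyhedral (equivalently, finitely generated) if and only if it is the intersection of finitely many closed half-spaces whose bounding hyperplanes pass through the origin. So if $\mathcal{P}_{n,m}$ were polyhedral, we could write $\mathcal{P}_{n,m} = \{ p : \ell_j(p) \leq 0,\ j = 1,\dots,N \}$ for finitely many linear functionals $\ell_j$ on the ambient polynomial space. Intersecting with $V$ yields $\mathcal{P}_{n,2n} = \{ p \in V : \ell_j(p) \leq 0,\ j = 1,\dots,N \}$, which exhibits $\mathcal{P}_{n,2n}$ as a polyhedral cone in $V$, contradicting Theorem \ref{thm:P_2n_not_polyhedral}. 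Therefore $\mathcal{P}_{n,m}$ is not polyhedral for any $m > 2n$.

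There is essentially no serious obstacle here: the only point requiring care is the bookkeeping that the ``degree-$\le 2n$ slice'' of $\mathcal{P}_{n,m}$ is exactly $\mathcal{P}_{n,2n}$ rather than some smaller set, and this is immediate from $2n < m$. One could alternatively mimic the exposed-ray perturbation argument of Theorem \ref{thm:P_2n_not_polyhedral} directly inside $\mathcal{P}_{n,m}$ (using that the generators $1, x, \dots, x^{2n}$ still have empty zero set and that the Loewy polynomials of degree $2n$ still lie in $\mathcal{P}_{n,m}$), but the subspace-intersection route is shorter and simply reuses the theorem as a black box.
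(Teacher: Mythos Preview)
Your argument is correct. Both you and the paper reduce the corollary to Theorem \ref{thm:P_2n_not_polyhedral}, but via different structural observations about how $\mathcal{P}_{n,2n}$ sits inside $\mathcal{P}_{n,m}$. The paper invokes \cite[Lemma 3.1]{loewy2023} to the effect that $\mathcal{P}_{n,2n}$ is a \emph{face} of $\mathcal{P}_{n,m}$, and then uses that every face of a polyhedral cone is polyhedral. You instead observe that $\mathcal{P}_{n,2n}$ is the intersection of $\mathcal{P}_{n,m}$ with the linear subspace $V$ of polynomials of degree at most $2n$, and use that a polyhedral cone sliced by a linear subspace is polyhedral (immediate from the half-space description). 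Your route is more self-contained: it avoids the external citation and the nontrivial verification that $\mathcal{P}_{n,2n}$ is a face, relying only on the tautology $\mathcal{P}_{n,2n} = \mathcal{P}_{n,m}\cap V$. The paper's route, on the other hand, records the stronger structural fact that $\mathcal{P}_{n,2n}$ is actually a face, which may be of independent interest.
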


\begin{proof}
    If a convex cone is polyhedral, then every face must be polyhedral. Loewy in \cite[Lemma 3.1]{loewy2023} showed that $\mathcal{P}_{n,2n}$ is a face of $\mathcal{P}_{n,m}$ and by Theorem \ref{thm:P_2n_not_polyhedral} that face is not polyhedral.
\end{proof}

\section{Properties of the negative coefficients}

\begin{thm} \label{thm:t=2}
    The largest $t \in \mathbb{R}^+$ such that 
    \[
    p_t(x) = \sum_{k=0}^{n-1} x^{k+s} - tx^m + \sum_{k=0}^{n-1} x^{2m + k -s} \in \mathcal{P}_n
    \]
    for all $m,s \in \mathbb{Z}^{\geq 0}$ such that $m \geq n$ and $s \leq m-n$ is $t = 2$.
\end{thm}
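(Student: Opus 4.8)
The plan is to prove the theorem in two halves: first show that $p_2 \in \mathcal{P}_n$ for every admissible $m, s$ (so $t = 2$ is achievable), and then exhibit, for each $t > 2$, a nonnegative matrix $A$ with $(p_t(A))_{i,j} < 0$ for some choice of $m, s$ (so nothing larger than $2$ works). By Lemma \ref{lem:positive_stochastic} it suffices throughout to test $p_t$ on scalings $\rho A$ of positive row-stochastic matrices $A$. Writing $p_t(x) = x^s\bigl(\sum_{k=0}^{n-1}x^k\bigr) + x^{m}\bigl(-t + x^{m - s}\sum_{k=0}^{n-1}x^{k}\bigr)$ — or, more symmetrically, factoring out $x^{m}$ and grouping the two length-$n$ geometric blocks against the central monomial — is the first manipulation I would try, since it isolates the single negative term and makes the "$2$" look like it should come from an AM–GM-type bound $x^a + x^b \geq 2x^{(a+b)/2}$ pairing the low block with the high block.

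For the achievability direction ($t = 2$), the key point is that the exponents are arranged symmetrically about $m$: the low block occupies $\{s, s+1, \dots, s+n-1\}$ and the high block occupies $\{2m - s, 2m - s + 1, \dots, 2m - s + n - 1\}$, and $s + (2m - s) = 2m$, so each low exponent $s + k$ pairs with a high exponent $2m - s - k$ summing to $2m$. On a positive stochastic matrix $A$ evaluated at $\rho A$, I would try to show $((\rho A)^{s+k})_{i,j} + ((\rho A)^{2m - s - k})_{i,j} \geq 2((\rho A)^{m})_{i,j}$ entrywise for each $k$; summing over $k = 0, \dots, n-1$ then gives $p_2(\rho A) \geq 0$. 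The natural tool is that for a positive stochastic matrix the sequence of powers is "log-convex" in an appropriate entrywise sense — or, failing a clean entrywise inequality, one passes to the Perron–Frobenius/spectral picture: the relevant quantity behaves like $\sum_j c_j \lambda_j^{\text{power}}$ for eigenvalues $\lambda_j$ with $|\lambda_j| \leq 1$, and convexity of $x \mapsto \lambda^x$ for $\lambda > 0$ combined with careful handling of the non-Perron eigenvalues yields the bound. This spectral reduction, and in particular controlling the complex and negative eigenvalues so that the convexity argument still closes, is where I expect the real work to be; it may be necessary to invoke the coefficient results of Clark–Paparella (\cite{clark2022}) or the structure used by Loewy in \cite[Theorem 2.1]{loewy2023} rather than reprove a log-convexity statement from scratch.

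For the optimality direction ($t > 2$ fails), I would look for a single well-chosen test matrix — most likely a $2 \times 2$ positive stochastic matrix, or a circulant, with eigenvalues $1$ and $\lambda$ for $\lambda \in (-1, 1)$ or $\lambda$ a root of unity — and compute $p_t$ on $\rho A$ as an explicit function of $\rho$ and $\lambda$. The idea is that the worst case is exactly a "flat" configuration where all the matrix powers involved are nearly equal (this happens near $\lambda = 1$, i.e.\ taking $A$ close to the all-ones/rank-one stochastic matrix and $\rho = 1$), so that $p_t(\rho A) \approx (2n)\cdot(\text{common value}) - t\cdot(\text{common value}) = (2n - t)(\cdots)$ — but that only rules out $t > 2n$, so one instead needs to exploit the off-diagonal entries, where the rank-one limit contributes equally to all powers but the $O(\lambda)$ correction has sign alternating with the exponent's parity, degrading the bound from $2n$ down to $2$. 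Concretely: take $n$ arbitrary, $s = 0$, $m = n$, and a matrix whose nontrivial eigenvalue $\lambda$ is close to $-1$ (or a primitive root of unity chosen to align phases), and show the central term $-t\lambda^{m}$ cannot be compensated by $\sum_k \lambda^{k} + \sum_k \lambda^{2m + k}$ once $t > 2$; degenerate small cases like $n = 1$ (where $p_t(x) = x^s - tx^m + x^{2m - s}$ and AM–GM gives exactly $t \leq 2$) should be checked first as they pin down the constant and suggest the right asymptotic regime. The main obstacle, then, is the achievability half: producing a clean entrywise or spectral proof that the symmetric pairing of exponents really does buy a factor of $2$ uniformly in $n$, $m$, and $s$, for all positive stochastic matrices simultaneously.
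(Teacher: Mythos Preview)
Your achievability argument has a genuine gap. First, a small arithmetic slip: the high block is $\{2m-s,\dots,2m-s+n-1\}$, and the partner $2m-s-k$ you propose for $s+k$ lies in that block only when $k=0$; the two blocks are \emph{not} reflections of one another about $m$. More importantly, the entrywise inequality $((\rho A)^a)_{ij}+((\rho A)^b)_{ij}\ge 2((\rho A)^{(a+b)/2})_{ij}$ you want to sum is simply false for positive stochastic $A$. Already for $A=\bigl(\begin{smallmatrix}0.9&0.1\\0.1&0.9\end{smallmatrix}\bigr)$ and $\rho=1$ one has $(A^0)_{12}+(A^4)_{12}=0.2952<0.36=2(A^2)_{12}$. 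Off-diagonal entries of powers of a stochastic matrix are typically \emph{concave} in the exponent (they increase toward the stationary value), so the ``log-convexity'' you hope to extract spectrally is not there, and no amount of careful bookkeeping of the non-Perron eigenvalues will rescue a term-by-term pairing.

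The paper sidesteps this entirely with a combinatorial walk argument. Viewing $(A^m)_{ij}$ as a weighted sum over length-$m$ walks from $i$ to $j$ on the complete digraph with edge weights from $A$, each walk $W$ of length $m$ is matched with a shorter walk $W'$ of length $m-\ell\in\{s,\dots,s+n-1\}$ obtained by deleting cycles, and a longer walk $W''$ of length $m+\ell$ obtained by traversing those same cycles twice. Because the edge-weight products satisfy $w_{W'}\,w_{W''}=w_W^{\,2}$ exactly, the contribution $\rho^{m-\ell}w_{W'}-2\rho^m w_W+\rho^{m+\ell}w_{W''}$ equals $\rho^{m-\ell}w_{W'}(1-u)^2\ge 0$ with $u=\rho^{\ell}w_W/w_{W'}$; summing over walks yields $p_2(\rho A)\ge 0$. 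The key idea you are missing is that AM--GM is applied \emph{per walk}, where the multiplicative relation is exact, rather than \emph{per matrix entry}, where it fails. Your optimality sketch is fine and matches the paper's: $1-tu+u^2\ge 0$ for all $u>0$ forces $t\le 2$.
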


\begin{proof}
    By Lemma \ref{lem:positive_stochastic} for $p_t \in \mathcal{P}_n$ it suffices to consider $p_t(\rho A) \geq 0$ for all stochastic $A \in \mathsf{M}_n^+$ and $\rho \in \mathbb{R}^+$. We associate with the entries of $A$ with the weights of a directed fully connected graph $G$ on $n$ nodes. Then, the $(i,j)$th entry of $(\rho A)^m$ can be viewed as the sum of the product of the weights of walks of length $m$ in $G$. We will show that each of the walks in $(2(\rho A)^m)_{ij}$ can be bounded by walks from $\sum_{k=0}^{n-1} (\rho A)^{k+s}$ or from $\sum_{k=0}^{n-1} (\rho A)^{2m + k-s}$.

    Let $P_{i,j}$ denote a walk of length $m-\ell$ from vertex $i$ to vertex $j$ on $G$ where $\ell \in \mathbb{Z}^{\geq 0}$ such that $s \leq m - \ell \leq s+n-1$. The product of the weights of this walk contributes to the sum of $(A^{m-\ell})_{i,j}$ which is part of the sum from $k=0$ to $n-1$. To make $P_{i,j}$ a walk of length $m$, we need to introduce cycles that account to a total length of $\ell$. Let $w$ be the sum of the product of the weights of all those cycles. Now we can traverse these cycles twice to make walks of length $m+\ell$. These double cycles will have a sum product of cycle weights of $w^2$. This gives our walk polynomial associated with the entry $i,j$ of $p_t(\rho A)$ as
    \[
    \rho^{m - \ell} - tw \rho^{m} + w^2 \rho^{m + \ell}
    = \rho^{m - \ell} \left( 1 - t(w \rho^{\ell}) + (w \rho^{\ell})^2\right).
    \]
    Since $\rho^{m-\ell}>0$, for the above polynomial to be nonnegative we need 
    \[
    1 - t(w \rho^{\ell}) + (w \rho^{\ell})^2 \geq 0
    \]
    which is true for all $\rho,w \in \mathbb{R}$ when $t=2$. Notice that when $t > 2$ there exists a $\rho$ and $w$ such that the above inequality is false. Thus, $2$ is the maximum for $t$.
\end{proof}

\begin{rem}
    When $s=0$ and $m=n$, Theorem \ref{thm:t=2} answers \cite[Question 2.1]{loewy2023}. The bound for $a$ in Loewy's $p_a$ is 2 for all $n \in \mathbb{Z}^+$. 
\end{rem}

\begin{cor} \label{cor:middle_negative}
    Let $\alpha \in \mathbb{R}^+$ and $m = n \lceil \alpha / 2 \rceil$, then 
    \[
    \sum_{k=0}^{m - 1} x^k - \alpha x^{m} + \sum_{k=m + 1}^{2m} x^k \in \mathcal{P}_n.
    \]
\end{cor}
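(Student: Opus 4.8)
The plan is to obtain Corollary~\ref{cor:middle_negative} as a direct consequence of Theorem~\ref{thm:t=2} by choosing the parameters $s$ and $t$ appropriately and then padding with extra nonnegative terms. First I would observe that $\alpha \le 2\lceil \alpha/2\rceil$, so writing $N := \lceil \alpha/2 \rceil$ and $m = nN$, the polynomial $q(x) := \sum_{k=0}^{m-1} x^k - \alpha x^m + \sum_{k=m+1}^{2m} x^k$ has center coefficient $-\alpha$ that is no more negative than $-2N$. The idea is to split the $2m = 2nN$ positive monomials on either side of $x^m$ into $N$ consecutive blocks of $n$ monomials each, so that $q$ becomes (up to the scalar on the center term) a sum of $N$ shifted copies of the Theorem~\ref{thm:t=2} polynomial with $t=2$, plus a leftover nonnegative polynomial absorbing the slack $2N - \alpha \ge 0$.

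Concretely, the key steps, in order, are: (i) For each $j \in \{0,1,\dots,N-1\}$, apply Theorem~\ref{thm:t=2} with $t = 2$, the same $m = nN$, and $s = jn$; since $0 \le jn \le (N-1)n = m-n$, the hypotheses $m \ge n$ and $s \le m-n$ are satisfied, so
\[
p^{(j)}(x) := \sum_{k=0}^{n-1} x^{k+jn} - 2x^{m} + \sum_{k=0}^{n-1} x^{2m+k-jn} \in \mathcal{P}_n.
\]
(ii) Sum over $j = 0,\dots,N-1$: the lower blocks $\sum_{j}\sum_{k=0}^{n-1} x^{k+jn}$ telescope exactly to $\sum_{k=0}^{m-1} x^k$, the upper blocks $\sum_j \sum_{k=0}^{n-1} x^{2m+k-jn}$ telescope exactly to $\sum_{k=m+1}^{2m} x^k$, and the center terms sum to $-2N\,x^m$. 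Hence $\sum_{j=0}^{N-1} p^{(j)}(x) = \sum_{k=0}^{m-1} x^k - 2N x^m + \sum_{k=m+1}^{2m} x^k \in \mathcal{P}_n$, since $\mathcal{P}_n$ is closed under addition. (iii) Since $2N - \alpha \ge 0$ and $x^m \in \mathcal{P}_n$, add $(2N-\alpha)x^m$ to conclude that $q(x) \in \mathcal{P}_n$, using that $\mathcal{P}_n$ is a convex cone.

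The only point requiring care — and the step I expect to be the main (minor) obstacle — is verifying the telescoping of the index sets: one must check that the blocks $\{jn, jn+1, \dots, jn+n-1\}$ for $j=0,\dots,N-1$ partition $\{0,1,\dots,m-1\}$, and likewise that $\{2m-jn, \dots, 2m-jn+n-1\}$ ranges over $\{m+1,\dots,2m\}$ — here the upper-block index $2m+k-jn$ with $k\in\{0,\dots,n-1\}$ and $j\in\{0,\dots,N-1\}$ runs over $\{2m-(N-1)n,\dots,2m\} = \{m+1-n+\dots\}$; one should double-check the endpoints match $m+1$ and $2m$ exactly (they do, since $2m - (N-1)n = m+n$ and the blocks descend by $n$, covering $m+1$ through $2m$ when reindexed). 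Everything else is immediate from Theorem~\ref{thm:t=2} and the cone structure of $\mathcal{P}_n$.
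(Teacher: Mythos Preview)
Your approach is the same as the paper's: write $N=\lceil\alpha/2\rceil$, apply Theorem~\ref{thm:t=2} with $t=2$ and shift $s=jn$ for $j=0,\dots,N-1$, and sum. You are in fact more careful than the paper in one respect: you explicitly add the slack $(2N-\alpha)x^m\in\mathcal{P}_n$ at the end, whereas the paper's proof simply asserts that the $p_s$ sum to $p$ even though the middle coefficients are $-2N$ and $-\alpha$ respectively.

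There is, however, a genuine arithmetic slip in exactly the place you flagged as needing care. With Theorem~\ref{thm:t=2} as literally stated, the upper-block exponent $2m+k-jn$ for $k\in\{0,\dots,n-1\}$ and $j\in\{0,\dots,N-1\}$ ranges from $2m-(N-1)n=m+n$ (at $k=0$, $j=N-1$) up to $2m+n-1$ (at $k=n-1$, $j=0$), so the upper blocks cover $\{m+n,\dots,2m+n-1\}$, not $\{m+1,\dots,2m\}$. You yourself compute the minimum as $m+n$ and then assert coverage of $m+1$ through $2m$, which is inconsistent; your sum $\sum_j p^{(j)}$ is therefore a different polynomial from $q$, and step~(ii) fails as written. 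The underlying issue is a typo in the displayed upper sum of Theorem~\ref{thm:t=2}: its own proof (the doubled-cycle walks have length $m+\ell$ with $m-\ell\in\{s,\dots,s+n-1\}$) shows the intended upper exponents are $\{2m-s-n+1,\dots,2m-s\}$, and with that correction your telescoping goes through exactly. The paper's $p_s$ is written with the corrected upper block already in place, which is why its index ranges look different from yours.
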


\begin{proof}
    Let $\alpha \in \mathbb{R}^+$, $m = n \lceil \alpha / 2 \rceil$, and
    \[
    p(x) = \sum_{k=0}^{m - 1} x^k - \alpha x^{m} + \sum_{k=m + 1}^{2m} x^k,
    \]
    then we can separate $p$ into 
    \[
    p_s(x) = \sum_{k=sn}^{n(s + 1) - 1} x^k - 2x^{m} + \sum_{k=m + 1 + sn}^{m + n(s+1)} x^k
    \]
    where $s \in \{0, \dots, \lceil \alpha / 2 \rceil\}$. By Theorem \ref{thm:t=2} each of the $p_s \in \mathcal{P}_n$, thus $p \in \mathcal{P}_n$. 
\end{proof}

\begin{conj}
    The largest $t \in \mathbb{R}^+$ such that 
    \[
    p_t(x) = \sum_{k=0}^{n-1} x^{k} - tx^m + \sum_{k=0}^{n-1} x^{s + k} \in \mathcal{P}_n
    \]
    for all $m,s \in \mathbb{Z}^{\geq 0}$ such that $s > m \geq n$ is a continuous function of $m$ and $s$ bounded between 1 and 2.
\end{conj}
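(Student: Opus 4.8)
The plan is to follow the strategy of Theorem~\ref{thm:t=2}. By Lemma~\ref{lem:positive_stochastic} it suffices to test $p_t$ on $\rho A$ with $A$ positive and row-stochastic and $\rho\in\mathbb R^+$, reading $(p_t(\rho A))_{ij}$ off the weighted walks on the complete digraph with loops whose edge weights are the entries of $\rho A$. Set $T(m,s):=\sup\{t\in\mathbb R^+:p_t\in\mathcal P_n\}$, so that the statement to be proved is that $T$ is a (suitably interpreted) continuous function of $(m,s)$ with $1\le T(m,s)\le 2$. For the upper bound I would use the near-permutation matrices $A_\varepsilon=(1-\varepsilon)P+\tfrac\varepsilon n J$, where $P$ is the permutation matrix of the $n$-cycle $1\to 2\to\cdots\to n\to1$ and $J$ is the all-ones matrix: each $A_\varepsilon$ is positive and row-stochastic, and $A_\varepsilon^\ell\to P^\ell$ entrywise as $\varepsilon\to0^+$. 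Since $P^n=I$, in the entries $(i,j)$ with $j\equiv i+a\pmod n$, where $a:=m\bmod n$, one gets $(p_t(\rho A_\varepsilon))_{ij}\to\rho^{a}-t\rho^{m}+\rho^{b}$ with $b:=s+\bigl((m-s)\bmod n\bigr)\in\{s,\dots,s+n-1\}$; at $\rho=1$ this is $2-t$, negative for $t>2$. Letting $\rho$ vary shows more: $T(m,s)\le\min_{\rho>0}\bigl(\rho^{a-m}+\rho^{b-m}\bigr)$, and since $a<m<b$ each of $\rho^{a-m}$ and $\rho^{b-m}$ exceeds $1$ on one side of $\rho=1$, so this minimum lies strictly between $1$ and $2$, equalling $2$ exactly when $b=2m-a$ (which recovers the case $m=n$, $s=n+1$ of Corollary~\ref{cor:middle_negative}), and it is given by an explicit, manifestly continuous formula in the parameters.

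I would conjecture that near-permutation matrices are extremal, i.e.\ that this value is exactly $T(m,s)$; this is the hard direction and it subsumes the lower bound $T(m,s)\ge1$, namely $p_1\in\mathcal P_n$. Using the factorization $p_1(x)=(1+x+\cdots+x^{n-1})(1+x^s)-x^m$, one must show
\[
(B^m)_{ij}\ \le\ \sum_{r=0}^{n-1}(B^r)_{ij}\ +\ \sum_{k=0}^{n-1}(B^{s+k})_{ij}
\]
for every $B=\rho A$ with $A$ positive row-stochastic and all $i,j$. The natural attack is walk-by-walk: canonically delete simple cycles $C_1,\dots,C_p$ from a walk of length $m$ from $i$ to $j$, reducing it to a walk $W_0$ of length $r\le n-1$; if every $C_\nu$ has $B$-weight $\le1$ then the original walk weighs no more than $W_0$, a lower-block walk, whereas if some $C_\nu$ has $B$-weight $>1$ one can re-traverse it enough extra times to land at a length in $\{s,\dots,s+n-1\}$ without increasing the $B$-weight, an upper-block walk. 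The obstacle --- which I expect to be the essential content of the conjecture --- is that this reduction is many-to-one, so the induced assignment of length-$m$ walks to lower/upper walks is not injective and the weights cannot simply be summed. What is really needed is the sharper per-$W_0$ estimate $\phi(W_0,m-r)\le 1+\sum_{k=0}^{n-1}\phi(W_0,s-r+k)$, where $\phi(W_0,\ell)$ is the total $B$-weight of all ways of decorating $W_0$ with cycles of total length $\ell$. This has to use the interplay of all three pieces --- the short block near degree $0$, the dilation $\rho$, and the long-range block near degree $s$ --- because the tempting local shortcuts fail: the entrywise inequality $A^m\le\sum_{k=0}^{n-1}A^k$ is false (e.g.\ $A=\bigl(\begin{smallmatrix}9/10&1/10\\1/2&1/2\end{smallmatrix}\bigr)$, $m=n=2$), and so is $\sum_{c=\ell}^{\ell+n-1}(A^c)_{ii}\ge1$ (e.g.\ $A=\bigl(\begin{smallmatrix}1/10&9/10\\9/10&1/10\end{smallmatrix}\bigr)$, $n=2$, $\ell=1$).

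Finally, granting the identity $T(m,s)=\min_{\rho>0}(\rho^{a-m}+\rho^{b-m})$, continuity and the bounds $1\le T(m,s)\le 2$ follow from the elementary one-variable analysis of the first paragraph, after which one records the consistency checks --- agreement with Corollary~\ref{cor:middle_negative} and Theorem~\ref{thm:t=2} where the parameter ranges overlap, and $T(m,s)\to1$ as $s\to\infty$ with $m$ fixed (the optimal $\rho$ then tends to $1$ while $\rho^{b-m}\to0$), which is the precise sense in which the lower bound $1$ is approached but not attained. To summarize: the upper-bound half and the consistency checks should be routine; the real difficulty is identifying the extremal family, or equivalently establishing $p_1\in\mathcal P_n$, through the walk-decoration inequality above.
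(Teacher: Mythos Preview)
The statement you are addressing is labelled in the paper as a \emph{conjecture}, not a theorem; the paper gives no proof of it, so there is no paper argument to compare your proposal against. Your write-up is accordingly not a proof but a programme, and to your credit you are explicit about where the argument is complete and where it is not.

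The rigorous content of your proposal is the upper bound. Testing on $\rho P$ (or, as you prefer, on the positive perturbations $\rho A_\varepsilon$ and passing to the limit) correctly yields
\[
T(m,s)\ \le\ \min_{\rho>0}\bigl(\rho^{a-m}+\rho^{b-m}\bigr)\in(1,2],
\]
and in particular $T(m,s)\le 2$; your computation of $a=m\bmod n$ and $b=s+((m-s)\bmod n)$ and the one-variable analysis of this minimum are correct. One phrasing slip: you write ``strictly between $1$ and $2$, equalling $2$ exactly when $b=2m-a$'', which is self-contradictory; you mean the value lies in $(1,2]$, with equality at $2$ precisely when $b-m=m-a$.

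Everything else is conditional. That near-permutation matrices are extremal is an additional conjecture on top of the one stated; the lower bound $T(m,s)\ge 1$, i.e.\ $p_1\in\mathcal P_n$, is not established --- your walk-decoration inequality $\phi(W_0,m-r)\le 1+\sum_{k=0}^{n-1}\phi(W_0,s-r+k)$ is the right shape of target, but you only pose it and then exhibit the counterexamples showing that the natural shortcuts fail. Consequently the continuity assertion, which you derive only from the explicit candidate formula, is also conditional. None of this is a defect relative to the paper, since the paper proves nothing about this statement either; but your proposal should be read as a heuristic together with a rigorous upper bound, not as a proof of the conjecture.
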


\section{Measure of polynomials restricted to the unit sphere}

In this section, we will consider the volume that $\mathcal{P}_{n,k}$ takes within the unit sphere. We are treating $\mathcal{P}_{n,k}$ as a subset of a $k+1$ dimensional vector space with respect to the coefficients of the polynomials.

\begin{lem} \label{lem:hyperplanes}
    Let $C_1$ and $C_2$ be convex compact subsets of $\mathbb{R}^n$ with nonempty interior such that $C_2 \subset C_1$, then $\mathcal{H}^n(C_2) < \mathcal{H}^n(C_1)$. 
\end{lem}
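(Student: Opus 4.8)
The plan is to reduce the statement to Lebesgue measure and then to exhibit a genuinely $n$-dimensional ball that sits inside $C_1$ but misses $C_2$. First I would record the standard fact that, with the normalization used in \cite{evans2015}, the $n$-dimensional Hausdorff measure $\mathcal{H}^n$ agrees with $n$-dimensional Lebesgue measure $\mathcal{L}^n$ on $\mathbb{R}^n$, and that a compact convex set with nonempty interior has finite and strictly positive measure. In particular $0 < \mathcal{H}^n(C_2) \leq \mathcal{H}^n(C_1) < \infty$, so the asserted strict inequality is meaningful and it suffices to produce a subset of $C_1 \setminus C_2$ of positive measure.

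Since $C_2 \subsetneq C_1$, I would fix a point $x_0 \in C_1 \setminus C_2$. Because $C_2$ is compact, hence closed, $d := \operatorname{dist}(x_0, C_2) > 0$, so the open ball of radius $d$ about $x_0$ is disjoint from $C_2$. This ball need not be contained in $C_1$ (the point $x_0$ may lie on $\partial C_1$), so I would slide $x_0$ slightly into the interior: pick $z \in \operatorname{int} C_1$, which exists by hypothesis, and set $w_\lambda := (1-\lambda)z + \lambda x_0$. By the classical convexity fact that the half-open segment from an interior point of a convex set to any point of the set lies entirely in the interior (e.g.\ \cite[Theorem 6.1]{rockafellar1997convex}), $w_\lambda \in \operatorname{int} C_1$ for every $\lambda \in [0,1)$, while $w_\lambda \to x_0$ as $\lambda \uparrow 1$; so I may fix some $\lambda \in [0,1)$ with $|w_\lambda - x_0| < d/2$. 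As $\operatorname{int} C_1$ is open there is $\rho > 0$ with $B(w_\lambda,\rho) \subseteq C_1$; putting $r := \min\{\rho, d/2\} > 0$, the triangle inequality gives $B(w_\lambda, r) \subseteq C_1$ and $B(w_\lambda, r) \cap C_2 = \emptyset$.

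Finally I would conclude using additivity of $\mathcal{H}^n = \mathcal{L}^n$ on the disjoint Borel sets $C_2$ and $B(w_\lambda, r)$, both contained in $C_1$:
\[
\mathcal{H}^n(C_1) \;\geq\; \mathcal{H}^n\!\big(C_2 \cup B(w_\lambda, r)\big) \;=\; \mathcal{H}^n(C_2) + \mathcal{H}^n\!\big(B(w_\lambda, r)\big) \;>\; \mathcal{H}^n(C_2),
\]
where the first step is monotonicity, the second is finite additivity on disjoint measurable sets, and the last uses that an open ball of positive radius has positive (finite) measure together with $\mathcal{H}^n(C_2) < \infty$.

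As for difficulty, there is no serious obstacle. The only step that needs more than the triangle inequality is the passage from the single point $x_0 \in C_1 \setminus C_2$ to an honestly $n$-dimensional ball inside $C_1 \setminus C_2$: this can fail if $C_1$ is not convex or has empty interior, which is precisely where those two hypotheses on $C_1$ enter (the analogous properties of $C_2$ are in fact not needed beyond closedness, which compactness supplies). One should also double-check that the identification $\mathcal{H}^n = \mathcal{L}^n$ is being used with the same normalizing constant as in \cite{evans2015}, which holds by the construction there.
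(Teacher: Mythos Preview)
Your argument is correct. It differs from the paper's proof in both machinery and construction. The paper separates the chosen point $x\in C_1\setminus C_2$ from $C_2$ by a supporting hyperplane, builds a cone with apex $x$ over $C_2$ truncated at that hyperplane, and then invokes the coarea formula to certify that this cone has positive $\mathcal{H}^n$-measure. You instead use only that $C_2$ is closed (to get $\operatorname{dist}(x_0,C_2)>0$) together with the standard fact that the half-open segment from an interior point of a convex set to any of its points stays in the interior, and thereby produce an honest Euclidean ball inside $C_1\setminus C_2$; the strict inequality then follows from additivity of Lebesgue measure. Your route is shorter and more elementary, needing neither the Separation Theorem nor the coarea formula, and it also makes transparent which hypotheses are really used: convexity and nonempty interior for $C_1$, and mere closedness for $C_2$. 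The paper's approach, by contrast, ties the argument more visibly to the convex geometry of $C_2$ (supporting hyperplanes), at the cost of heavier tools.
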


\begin{proof}
    Let $C_1$ and $C_2$ be convex, compact subsets of $\mathbb{R}^n$ with non-empty interior such that $C_2 \subset C_1$ and let $x \in C_1 \setminus C_2$. By the Separation Theorem \cite[p. 344]{cheney} there exist two open halfspaces $H_1$ and $H_2$ separated by the hyperplane $S_x$ where $S_x$ is a supporting hyperplane of $C_2$, $x \in H_1$ and $C_2 \subset H_2$, and where $\text{dist}(S_x, x) = \epsilon > 0$. Let $\widehat{S}_x$ be the family of supporting hyperplanes of the convex hull of $C_2 \cup \{x\}$ containing $x$. Let $C$ be the convex cone bounded by $\widehat{S}_x$ and $S_x$. Note, by construction $C \subseteq C_1$.

    Note, we can associate with each ray from $x$ through $C_2$ an angle element $\theta$ and the point $y_\theta \in S_x$. Let $R_{\theta} (t) = tx + (1-t) y_{\theta}$ where $0 < t < 1$. By construction, 
    \[
    \mathcal{H}^1\left( \{R_{\theta}(t) : 0 < t < 1\}\right) \geq > 0.
    \]
    Let $\widehat{C}$ be the set of all such $\theta$s. Writing $\theta = (\theta_1, \dots, \theta_n)$, we have that 
    \[
    \mathcal{H}^1 \left( \{\sigma_j : \exists~ \theta \in \widehat{C} \text{ with } \theta = (\theta_1, \dots, \theta_j = \sigma_j, \dots, \theta_n)\}\right) \not = 0
    \]
    for all $j \in \{1, 2, \dots, n\}$.
    Otherwise $\mathcal{H}^n(C_2) = 0$ contradicting that $C_2$ has a nonempty interior. Then by the coarea formula \cite[Theorem 3.10]{evans2015} we have 
    \[
    \mathcal{H}^n(C) = \int_{\widehat{C}} \mathcal{H}^1 \left( \{ R_{\theta}(t) : 0 < t < 1\} \right) ~d\theta > 0.
    \]
    Therefore with $C_2 \cap \text{int} C = \emptyset$ we have
    \[
    \mathcal{H}^n(C_2) < \mathcal{H}^n(C_2 \cup C) \leq \mathcal{H}^n(C_1). \qedhere
    \]
\end{proof}

\begin{mydef}
    Define $\pi: \mathcal{P}_{n,k+1} \rightarrow \mathcal{P}_{n,k}$ by projecting off the $k+1$ term of the polynomial.
\end{mydef}

Note that the projection operator is continuous.

\begin{lem} \label{lem:isometric}
    Let $n,k \in \mathbb{Z}^+$ such that $k \geq 2n$, then $\mathcal{P}_{n,k} \cap \overline{B}_{k+1}$ is a convex compact subset of $\mathbb{R}^{k+1}$  with nonempty interior.
\end{lem}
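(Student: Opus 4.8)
The plan is to verify the three claimed properties — convexity, compactness, and nonempty interior — separately. Convexity of $\mathcal{P}_{n,k}$ is immediate: if $p,q \in \mathcal{P}_n$ and $A \in \mathsf{M}_n^{\geq 0}$, then $((1-t)p + tq)(A) = (1-t)p(A) + tq(A) \geq 0$ since the nonnegative matrices form a cone (in fact a convex set), and the degree bound is preserved under convex combination; intersecting with the convex set $\overline{B}_{k+1}$ keeps it convex. Closedness of $\mathcal{P}_n$ follows because for each fixed $A \in \mathsf{M}_n^{\geq 0}$ and each entry $(i,j)$, the map $p \mapsto (p(A))_{i,j}$ is linear (hence continuous) in the coefficient vector of $p$, so $\mathcal{Z}_{i,j}$-type sublevel conditions $(p(A))_{i,j} \geq 0$ are closed halfspaces, and $\mathcal{P}_{n,k}$ is an intersection of such halfspaces; intersecting with the compact ball $\overline{B}_{k+1}$ then yields a closed bounded, hence compact, subset of $\mathbb{R}^{k+1}$.

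The substantive point is the nonempty interior. First I would exhibit an interior point of $\mathcal{P}_{n,k}$ as a cone (before intersecting with the ball): I claim every polynomial $p$ all of whose coefficients are strictly positive lies in the interior of $\mathcal{P}_{n,k}$. Indeed, by Lemma~\ref{lem:positive_stochastic} it suffices to control $p(\rho A)$ for $\rho \in \mathbb{R}^+$ and $A$ positive stochastic; since $\rho A$ has strictly positive entries and every power $(\rho A)^j$ ($0 \le j \le k$) does too, a polynomial with all coefficients positive evaluates to a strictly positive matrix. To get an open neighborhood in $\mathbb{R}^{k+1}$, one must handle the unboundedness of the set of admissible $\rho$; here I would use that $A$ ranges over the (compact) set of positive stochastic matrices and rescale, or equivalently note the classical fact (used implicitly in Lemma~\ref{lem:extremal_face}) that for such a strictly positive $p$ there is $\epsilon > 0$ with $p(A) - \epsilon \sum_k |c_k| A^k \ge 0$ for all $A \in \mathsf{M}_n^{\geq 0}$, where $c_k$ are the coefficients of $p$; this exactly says the $\ell^1$-ball of radius $\epsilon$ around the coefficient vector of $p$ lies in $\mathcal{P}_{n,k}$, so $p$ is interior. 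Scaling $p$ down so that it lies in the open ball $B_{k+1}(0,1)$ — permissible since $\mathcal{P}_{n,k}$ is a cone — produces an interior point of $\mathcal{P}_{n,k} \cap \overline{B}_{k+1}$, and interiority in $\mathbb{R}^{k+1}$ of both factors at a common point gives a nonempty interior for the intersection.

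The main obstacle is making the "open neighborhood" step fully rigorous despite $\rho$ ranging over all of $\mathbb{R}^+$: one cannot simply appeal to continuity on a compact set of matrices because of the scalar $\rho$. The clean way around it is the $\ell^1$-perturbation estimate above, whose proof is the observation that for a strictly positive polynomial $p(x) = \sum_{k} c_k x^k$ one has, for every entrywise-nonnegative $A$, $(p(A))_{i,j} \ge \min_k c_k \cdot (\text{sum of the finitely many monomials hitting entry } (i,j)) $ while a perturbation by $q$ with $\|q\|_1 < \epsilon$ changes each entry by at most $\epsilon$ times the same monomial sum; choosing $\epsilon < \min_k c_k$ closes the argument uniformly in $A$ and hence in $\rho A$. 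This is the one place where the hypothesis $k \ge 2n$ is not actually needed for the conclusion as stated — it is carried along only because the ambient results (e.g. Lemma~\ref{lem:extremal_face}, the non-polyhedrality theorem) are phrased for that range — so I would simply note that the argument goes through for all $k \ge 1$, with $k \ge 2n$ assumed merely for consistency with the rest of the section.
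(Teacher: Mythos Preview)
Your proof is correct and follows the same three-part skeleton as the paper: verify convexity, closedness/compactness, and nonempty interior separately. The paper simply cites \cite{clark2022} and \cite{loewy2023} for convexity and closedness where you supply direct arguments, but the substance is identical.

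The one place you diverge is the nonempty-interior step, where you work harder than necessary. The paper's argument is a single line: $\mathcal{P}_{n,k}$ contains the entire nonnegative orthant of $\mathbb{R}^{k+1}$ (since any polynomial with nonnegative coefficients maps $\mathsf{M}_n^{\geq 0}$ into itself), and the orthant already has nonempty interior. Your $\ell^1$-perturbation argument is correct, but if you unwind it you are really just showing that when $\epsilon < \min_k c_k$ the perturbed polynomial still has all nonnegative coefficients --- i.e.\ you are rediscovering that the open positive orthant sits inside $\mathcal{P}_{n,k}$. In particular, the worry about $\rho$ ranging over an unbounded set never actually arises once you phrase it this way, so that paragraph can be dropped. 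Your observation that the hypothesis $k \geq 2n$ is not used here is accurate; the paper carries it only for consistency with the surrounding results.
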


\begin{proof}
    By \cite{clark2022}, $\mathcal{P}_n$ and $\mathcal{P}_{n,k}$ are convex and, as was shown in \cite{loewy2023}, $\mathcal{P}_{n,k}$ is closed. The nonempty interior follows from $\mathcal{P}_{n,k}$ always containing the nonnegative orthant of $\mathbb{R}^{k+1}$. Finally following $\overline{B}_{k+1}$ forms a compact subset and $\mathcal{P}_{n,k}$ is closed we get that $\mathcal{P}_{n,k} \cap \overline{B}_{k+1}$ is compact.
\end{proof}

\begin{lem} \label{lem:subset_measure}
    Let $n,k \in \mathbb{Z}^+$ such that $k \geq 2n$, then $\mathcal{P}_{n,k} \subset \pi(\mathcal{P}_{n,k+1})$.
\end{lem}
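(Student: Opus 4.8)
The inclusion $\mathcal{P}_{n,k}\subseteq\pi(\mathcal{P}_{n,k+1})$ is automatic: any $q\in\mathcal{P}_{n,k}$ has degree at most $k\le k+1$, hence lies in $\mathcal{P}_{n,k+1}$, and since its $x^{k+1}$-coefficient vanishes we have $\pi(q)=q$. So the whole content of the statement is the strictness, and the plan is to exhibit a single $p\in\mathcal{P}_{n,k+1}$ with $\pi(p)\notin\mathcal{P}_{n,k}$; in fact I will arrange $\pi(p)\notin\mathcal{P}_n$.

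First I would record the elementary closure property $x\,\mathcal{P}_n\subseteq\mathcal{P}_n$: if $p\in\mathcal{P}_n$ and $A\in\mathsf{M}_n^{\geq 0}$, then $A\,p(A)$ is a product of entrywise-nonnegative matrices, hence entrywise nonnegative, so $xp(x)\in\mathcal{P}_n$. Next I would apply Corollary \ref{cor:middle_negative} with $\alpha=2$ (so $m=n\lceil 1\rceil=n$) to obtain
\[
q_0(x)=\sum_{j=0}^{n-1}x^{j}-2x^{n}+\sum_{j=n+1}^{2n}x^{j}\in\mathcal{P}_n ,
\]
a polynomial of degree $2n$ with leading coefficient $1$ and with $x^{n}$-coefficient $-2$. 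Since $k\ge 2n$, set $r:=k+1-2n\ge 1$ and put $p(x):=x^{r}q_0(x)$. By the closure property $p\in\mathcal{P}_n$, and $\deg p=2n+r=k+1$, so $p\in\mathcal{P}_{n,k+1}$; moreover the $x^{k+1}$-coefficient of $p$ is $1$, and its coefficient of $x^{n+r}=x^{k-n+1}$ is $-2$.

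Finally, $\pi(p)(x)=p(x)-x^{k+1}=x^{r}\bigl(q_0(x)-x^{2n}\bigr)$ is a polynomial of degree $k$ whose coefficient of $x^{k-n+1}$ is still $-2$, and the index $k-n+1$ lies in $\{k-n+1,\dots,k\}$, the block of the last $n$ coefficients of a degree-$k$ polynomial. By the Clark--Paparella sign conditions (\cite[Corollary 4.2]{clark2022}, as invoked in the proof of Theorem \ref{thm:P_2n_not_polyhedral}), every polynomial in $\mathcal{P}_n$ has all of its last $n$ coefficients nonnegative; hence $\pi(p)\notin\mathcal{P}_n$, and in particular $\pi(p)\notin\mathcal{P}_{n,k}$. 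Combined with $\pi(p)\in\pi(\mathcal{P}_{n,k+1})$ and the inclusion of the first paragraph, this gives $\mathcal{P}_{n,k}\subsetneq\pi(\mathcal{P}_{n,k+1})$, which is the asserted (strict) inclusion.

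The one point needing care is the index bookkeeping: the forced-nonnegative tail of a degree-$(k+1)$ polynomial is indexed by $\{k-n+2,\dots,k+1\}$ and misses $k-n+1$, while that of a degree-$k$ polynomial is $\{k-n+1,\dots,k\}$ and hits it, and the choice $r=k+1-2n$ is precisely what parks the $-2$ of $q_0$ at that borderline index. I expect this to be routine once written out, and the construction is uniform in $n$: for $n=1$ it degenerates to $q_0=(1-x)^2$, $p=x^{k-1}(1-x)^2$, and $\pi(p)=x^{k-1}(1-2x)$, which is visibly negative at $x=1$.
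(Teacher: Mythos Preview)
Your argument is correct and follows essentially the same route as the paper's own proof: both show strictness by producing a $p\in\mathcal{P}_{n,k+1}$ whose projection $\pi(p)$ has a negative coefficient among the top $n$ coefficients, which Clark--Paparella forbids for members of $\mathcal{P}_n$ (hence of $\mathcal{P}_{n,k}$). The paper invokes Theorem~\ref{thm:t=2} directly and phrases the obstructed index as ``$k-n$'', while you use Corollary~\ref{cor:middle_negative}, the closure $x\mathcal{P}_n\subseteq\mathcal{P}_n$, and pin down the index as $k-n+1$ together with the verification that $\deg\pi(p)=k$; your bookkeeping is the more careful of the two, but the underlying idea is the same.
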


\begin{proof}
    By Theorem \ref{thm:t=2} there exists polynomials in $\pi(\mathcal{P}_{n,k+1})$ such that their $k - n$ term is negative. By \cite[Theorem 4.11]{clark2022} $\mathcal{P}_{n,k}$ does not have polynomials with their $k - n$ term negative. The result follows, noting that $\mathcal{P}_{n,k} \subset \mathcal{P}_{n,k+1}$.
\end{proof}

\begin{thm}
    Let $n,k \in \mathbb{Z}^+$ such that $k \geq 2n$, then 
    \[
    \mathcal{H}^{k+1} \left( \pi ( \mathcal{P}_{n,k+1} ) \cap \overline{B}_{k+1}(0,1) \right) > \mathcal{H}^{k+1} \left( \mathcal{P}_{n,k} \cap \overline{B}_{k+1} (0,1) \right)
    \]
\end{thm}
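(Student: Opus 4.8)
The plan is to apply Lemma~\ref{lem:hyperplanes}, but not directly to the pair $\mathcal{P}_{n,k}\cap\overline{B}_{k+1}(0,1)\subset\pi(\mathcal{P}_{n,k+1})\cap\overline{B}_{k+1}(0,1)$: the linear projection $\pi$ of the closed cone $\mathcal{P}_{n,k+1}$ need not be closed, so $\pi(\mathcal{P}_{n,k+1})\cap\overline{B}_{k+1}(0,1)$ might fail the compactness hypothesis of that lemma. Instead I would squeeze a single compact convex body strictly between the two sets and then invoke monotonicity of $\mathcal{H}^{k+1}$.

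Write $B:=\overline{B}_{k+1}(0,1)$ and $C_2:=\mathcal{P}_{n,k}\cap B$. By Lemma~\ref{lem:isometric}, $C_2$ is a convex, compact subset of $\mathbb{R}^{k+1}$ with nonempty interior. Since $\mathcal{P}_{n,k+1}$ is a convex cone and $\pi$ is linear, $\pi(\mathcal{P}_{n,k+1})$ is again a convex cone; by Lemma~\ref{lem:subset_measure} it strictly contains $\mathcal{P}_{n,k}$, so there is a polynomial $q\in\pi(\mathcal{P}_{n,k+1})\setminus\mathcal{P}_{n,k}$ --- concretely, one whose $(k-n)$-coefficient is negative, which exists by Theorem~\ref{thm:t=2} while being excluded from $\mathcal{P}_{n,k}$ by \cite[Theorem~4.11]{clark2022}. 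In particular $q\neq 0$, and because both $\pi(\mathcal{P}_{n,k+1})$ and $\mathcal{P}_{n,k}$ are cones through the origin, the rescaled point $p:=q/(2\norm{q})$ still lies in $\pi(\mathcal{P}_{n,k+1})$, still lies outside $\mathcal{P}_{n,k}$, and now lies in the open unit ball; thus $p\in\bigl(\pi(\mathcal{P}_{n,k+1})\cap B\bigr)\setminus C_2$.

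Next I would set $C_1:=\operatorname{conv}\bigl(C_2\cup\{p\}\bigr)$. Being the convex hull of a compact subset of $\mathbb{R}^{k+1}$, $C_1$ is compact; it is convex, and since $C_2\subseteq C_1$ it has nonempty interior; and $p\in C_1\setminus C_2$ gives $C_2\subsetneq C_1$. Moreover $C_1\subseteq\pi(\mathcal{P}_{n,k+1})\cap B$: both $C_2\subseteq\mathcal{P}_{n,k}\subseteq\pi(\mathcal{P}_{n,k+1})$ and $p\in\pi(\mathcal{P}_{n,k+1})$, so the convex hull lies in the convex set $\pi(\mathcal{P}_{n,k+1})$, and likewise $C_2\cup\{p\}\subseteq B$ with $B$ convex. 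Lemma~\ref{lem:hyperplanes} then gives $\mathcal{H}^{k+1}(C_2)<\mathcal{H}^{k+1}(C_1)$, and monotonicity of $\mathcal{H}^{k+1}$ together with $C_1\subseteq\pi(\mathcal{P}_{n,k+1})\cap B$ gives $\mathcal{H}^{k+1}(C_1)\le\mathcal{H}^{k+1}\bigl(\pi(\mathcal{P}_{n,k+1})\cap B\bigr)$. Chaining these inequalities, and using $\mathcal{H}^{k+1}(C_2)=\mathcal{H}^{k+1}(\mathcal{P}_{n,k}\cap B)$, yields the claim.

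The main obstacle is the one already flagged: $\pi(\mathcal{P}_{n,k+1})$ is not obviously closed --- the witness that breaks the naive argument is $x^{k+1}\in\mathcal{P}_{n,k+1}$ with $-x^{k+1}\notin\mathcal{P}_{n,k+1}$, exactly the asymmetry along $\ker\pi$ that can destroy closedness of a projected cone --- so one cannot feed $\pi(\mathcal{P}_{n,k+1})\cap B$ into Lemma~\ref{lem:hyperplanes} as stated. The detour through $C_1$ replaces that hypothesis by mere set monotonicity of measure. A secondary point worth checking carefully is that the separating polynomial of Lemma~\ref{lem:subset_measure} can be rescaled into the unit ball without re-entering $\mathcal{P}_{n,k}$; this is immediate from both sets being cones, but it is what guarantees that the strict inclusion $C_2\subsetneq C_1$ actually takes place inside $B$ rather than only in the ambient space.
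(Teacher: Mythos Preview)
Your argument is correct and follows the same route as the paper, whose proof is the single line ``Follows from Lemma~\ref{lem:hyperplanes}, Lemma~\ref{lem:subset_measure}, and Lemma~\ref{lem:isometric}.'' Your detour through $C_1=\operatorname{conv}(C_2\cup\{p\})$ is in fact a genuine improvement: the paper tacitly applies Lemma~\ref{lem:hyperplanes} with $C_1=\pi(\mathcal{P}_{n,k+1})\cap\overline{B}_{k+1}(0,1)$ without checking compactness, and your construction patches that gap cleanly while leaving the overall strategy unchanged.
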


\begin{proof}
    Follows from Lemma \ref{lem:hyperplanes}, Lemma \ref{lem:subset_measure}, and Lemma \ref{lem:isometric}.
\end{proof}

\begin{thm}
    Let $n,k \in \mathbb{Z}^+$ such that $k \geq 2n$, then 
    \[
    \mathcal{H}^{k+1} \left( \mathcal{P}_{n+1,k} \cap \overline{B}_{k+1}(0,1) \right) < \mathcal{H}^{k+1} \left( \mathcal{P}_{n,k} \cap \overline{B}_{k+1} (0,1) \right).
    \]
\end{thm}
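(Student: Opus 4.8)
The plan is to reduce the statement to Lemma~\ref{lem:hyperplanes}, exactly as in the previous theorem, by showing that
\[
C_2 := \mathcal{P}_{n+1,k} \cap \overline{B}_{k+1}(0,1)
\]
is a \emph{proper} subset of
\[
C_1 := \mathcal{P}_{n,k} \cap \overline{B}_{k+1}(0,1),
\]
with both sets convex, compact, and of nonempty interior. The set $C_1$ has these three properties by Lemma~\ref{lem:isometric}, and the same holds for $C_2$: the proof of Lemma~\ref{lem:isometric} only uses that $\mathcal{P}_{n+1,k}$ is closed (by \cite{loewy2023}), convex (by \cite{clark2022}), and contains the nonnegative orthant of $\mathbb{R}^{k+1}$, and all three of these are valid for every $k \geq 1$; in particular the degree hypothesis of that lemma is not needed for the order-$(n+1)$ cone. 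The inclusion $C_2 \subseteq C_1$ is immediate from $\mathcal{P}_{n+1} \subseteq \mathcal{P}_n$ (a standard fact; see \cite{loewy2023}): embedding an $n \times n$ nonnegative matrix as the leading principal block of an $(n+1) \times (n+1)$ nonnegative matrix with vanishing last row and column shows every $p \in \mathcal{P}_{n+1}$ satisfies $p \in \mathcal{P}_n$, and intersecting with the unit ball preserves this.

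The heart of the argument is to exhibit an element of $C_1 \setminus C_2$. Corollary~\ref{cor:middle_negative} with $\alpha = 2$ (equivalently Theorem~\ref{thm:t=2} with $m = n$ and $s = 0$, or \cite[Corollary~2.2]{loewy2023}) gives
\[
p(x) = \sum_{j=0}^{n-1} x^j \;-\; 2x^n \;+\; \sum_{j=n+1}^{2n} x^j \;\in\; \mathcal{P}_{n,2n} \subseteq \mathcal{P}_{n,k},
\]
the last inclusion because $\deg p = 2n \leq k$. On the other hand $p \notin \mathcal{P}_{n+1}$: the coefficient of $x^n$ equals $-2$, and $x^n$ is among the first $n+1$ coefficients $x^0, x^1, \dots, x^n$ of $p$, whereas Clark and Paparella \cite[Corollary~4.2 and Theorem~4.11]{clark2022} force the first $n+1$ coefficients of every polynomial in $\mathcal{P}_{n+1}$ to be nonnegative. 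Since $\mathcal{P}_{n,k}$ and $\mathcal{P}_{n+1}$ are cones, for $\epsilon > 0$ small enough the scaled polynomial $\epsilon p$ lies in $\mathcal{P}_{n,k} \cap \overline{B}_{k+1}(0,1) = C_1$ but not in $\mathcal{P}_{n+1}$, hence not in $C_2$. Thus $C_2 \subsetneq C_1$, and Lemma~\ref{lem:hyperplanes} applied in ambient dimension $k+1$ yields $\mathcal{H}^{k+1}(C_2) < \mathcal{H}^{k+1}(C_1)$, which is the claim.

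I expect the main obstacle to be the bookkeeping that isolates a single usable witness: one needs a polynomial whose negative coefficient sits in a position that is forbidden for order $n+1$ but allowed for order $n$, \emph{and} whose degree is at most $k$. The degree-$2n$ polynomial above satisfies all three requirements precisely because $k \geq 2n$ and because the ``nonnegativity band'' supplied by \cite{clark2022} (the first and last $n+1$ coefficients for $\mathcal{P}_{n+1}$, versus the first and last $n$ for $\mathcal{P}_n$) widens by exactly one index on each side when the order increases by one, so the $x^n$ coefficient of a degree-$2n$ polynomial is constrained by $\mathcal{P}_{n+1}$ but not by $\mathcal{P}_n$. A secondary, minor point is the nonempty-interior claim for $C_2$ in the range $2n \leq k < 2n+2$, which is covered by the observation above that the proof of Lemma~\ref{lem:isometric} does not actually require its stated lower bound on the degree.
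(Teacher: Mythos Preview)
Your argument is correct and follows the paper's own proof: establish that both sets are convex, compact, with nonempty interior (Lemma~\ref{lem:isometric}), verify the strict inclusion $\mathcal{P}_{n+1,k} \subsetneq \mathcal{P}_{n,k}$ (the paper simply cites \cite[Corollary~2.1]{loewy2023}, whereas you supply an explicit degree-$2n$ witness and even patch the nominal degree hypothesis in Lemma~\ref{lem:isometric} for the order-$(n+1)$ cone), and then apply Lemma~\ref{lem:hyperplanes}. One tiny slip: the parenthetical ``equivalently Theorem~\ref{thm:t=2} with $m=n$ and $s=0$'' actually yields a degree-$(3n-1)$ polynomial rather than your $p$; the citations of Corollary~\ref{cor:middle_negative} and \cite[Corollary~2.2]{loewy2023} are the correct ones, so this does not affect validity.
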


\begin{proof}
    The result follows from Lemma \ref{lem:isometric}, \cite[Corollary 2.1]{loewy2023} giving that $\mathcal{P}_{n+1,k} \subset \mathcal{P}_{n,k}$, and by Lemma \ref{lem:hyperplanes}.
\end{proof}

\begin{conj}
    Let $n,k \in \mathbb{Z}^+$ such that $k \geq 2n$, then 
    \[
    \mathcal{H}^{k+2} \left(  \mathcal{P}_{n,k+1} \cap \overline{B}_{k+2}(0,1) \right) < \mathcal{H}^{k+1} \left( \mathcal{P}_{n,k} \cap \overline{B}_{k+1} (0,1) \right)
    \]
\end{conj}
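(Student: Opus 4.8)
The plan is to slice the cone $\mathcal{P}_{n,k+1}$ by the value of its leading coefficient and reduce the conjecture to a quantitative comparison of $\mathcal{P}_{n,k}$ with its projection $\pi(\mathcal{P}_{n,k+1})$. Write a point of $\mathbb{R}^{k+2}$ as $(q,t)$, where $q\in\mathbb{R}^{k+1}$ records the coefficients of $1,x,\dots,x^{k}$ and $t\in\mathbb{R}$ is the coefficient of $x^{k+1}$, and for $t\in\mathbb{R}$ set $S_t:=\{q\in\mathbb{R}^{k+1}:(q,t)\in\mathcal{P}_{n,k+1}\}$. Three facts about the slices get used: $S_t=\varnothing$ for $t<0$, since the coefficient of the top power of a polynomial in $\mathcal{P}_n$ is nonnegative by \cite[Corollary 4.2]{clark2022}; $S_0=\mathcal{P}_{n,k}$, since a polynomial of degree at most $k$ lies in $\mathcal{P}_{n,k+1}$ if and only if it lies in $\mathcal{P}_{n,k}$; and $S_t\subseteq\pi(\mathcal{P}_{n,k+1})=:T$ for every $t$, where $T$ is a convex cone. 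Fubini's theorem then gives
\[
\mathcal{H}^{k+2}\bigl(\mathcal{P}_{n,k+1}\cap\overline{B}_{k+2}(0,1)\bigr)=\int_{0}^{1}\mathcal{H}^{k+1}\bigl(S_t\cap\overline{B}_{k+1}(0,\sqrt{1-t^2})\bigr)\,dt .
\]

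The next step is to bound each slice from above using only $S_t\subseteq T$ and the homogeneity of the cone $T$:
\[
\mathcal{H}^{k+1}\bigl(S_t\cap\overline{B}_{k+1}(0,\sqrt{1-t^2})\bigr)\le\mathcal{H}^{k+1}\bigl(T\cap\overline{B}_{k+1}(0,\sqrt{1-t^2})\bigr)=(1-t^2)^{(k+1)/2}\,V_T,
\]
with $V_T:=\mathcal{H}^{k+1}\bigl(T\cap\overline{B}_{k+1}(0,1)\bigr)$. After the substitution $t=\sin\phi$ the integral becomes a Wallis integral, and since $\int_{0}^{\pi/2}\sin^{k+2}\phi\,d\phi<1$ one already gets $\mathcal{H}^{k+2}(\mathcal{P}_{n,k+1}\cap\overline{B}_{k+2}(0,1))<V_T$. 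More to the point, writing $V_k:=\mathcal{H}^{k+1}\bigl(\mathcal{P}_{n,k}\cap\overline{B}_{k+1}(0,1)\bigr)$, the conjecture would follow from the single inequality
\[
V_T\ \le\ \frac{V_k}{\int_{0}^{\pi/2}\sin^{k+2}\phi\,d\phi}\,,
\]
the strict inequality in the conjecture being automatic because $S_0=\mathcal{P}_{n,k}$ is a proper subset of $T$ by Lemma \ref{lem:subset_measure}, so the slice estimate is strict for small $t>0$. In words: one must show that projecting off the leading coefficient enlarges the solid angle of $\mathcal{P}_{n,k}$ by a factor at most $\bigl(\int_0^{\pi/2}\sin^{k+2}\phi\,d\phi\bigr)^{-1}$, a quantity that exceeds $1$ and grows like $\sqrt{k}$. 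An alternative splitting of the Fubini integral — fiber length at most $\sqrt{1-|q|^2}$ over $q\in\mathcal{P}_{n,k}$ and at most $1$ over $q\in T\setminus\mathcal{P}_{n,k}$ — instead reduces the conjecture to $V_T-V_k<\int_{\mathcal{P}_{n,k}\cap\overline{B}_{k+1}(0,1)}(1-\sqrt{1-|q|^2})\,d\mathcal{H}^{k+1}(q)$, an explicit positive multiple of $V_k$; either form is enough.

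The remaining step, which I expect to be the real obstacle, is exactly this comparison of $V_T$ with $V_k$, equivalently an upper bound on the solid angle of $T\setminus\mathcal{P}_{n,k}$. By Lemma \ref{lem:subset_measure} this difference consists precisely of the polynomials of degree at most $k$ whose coefficient of $x^{k-n}$ is negative but which become nonnegativity-preserving once a suitable $x^{k+1}$ term is appended, and Theorem \ref{thm:t=2} controls how negative that coefficient can be made relative to the positive coefficients a degree-$(k+1)$ polynomial can supply. To convert this into a volume estimate I would seek a lower bound on the minimal admissible leading coefficient $a(q):=\min\{t\ge0:(q,t)\in\mathcal{P}_{n,k+1}\}$ in terms of $\operatorname{dist}(q,\mathcal{P}_{n,k})$ — an error-bound (\L ojasiewicz-type) inequality localized at the facet of $\mathcal{P}_{n,k}$ on which the coefficient of $x^{k-n}$ vanishes. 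Such a bound would simultaneously thin the region $T\setminus\mathcal{P}_{n,k}$ near that facet and shorten the fibers of $\mathcal{P}_{n,k+1}$ above it, which is what is needed to push the crude estimates above below $V_k$. Proving this facet-local refinement of Theorem \ref{thm:t=2} is the principal difficulty: since $\mathcal{P}_{n,k}$ is not polyhedral (Theorem \ref{thm:P_2n_not_polyhedral}) the classical Hoffman error bound does not apply directly, and a successful argument would very likely also bear on the companion conjecture that $V_k\to0$ as $k\to\infty$.
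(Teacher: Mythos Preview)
The statement under discussion is presented in the paper as a \emph{conjecture}; the paper offers no proof and no proof sketch for it. There is therefore nothing to compare your proposal against on the paper's side.

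As for your proposal itself: it is not a proof, and you say so. The Fubini slicing, the identification $S_0=\mathcal{P}_{n,k}$, the vanishing of $S_t$ for $t<0$, the inclusion $S_t\subseteq T=\pi(\mathcal{P}_{n,k+1})$, the homogeneity computation $\mathcal{H}^{k+1}(T\cap\overline{B}_{k+1}(0,r))=r^{k+1}V_T$, and the Wallis-integral evaluation are all correct, and the two reductions you derive,
\[
V_T\le \frac{V_k}{\displaystyle\int_0^{\pi/2}\cos^{k+2}\phi\,d\phi}
\qquad\text{or}\qquad
V_T-V_k<\int_{\mathcal{P}_{n,k}\cap\overline{B}_{k+1}}\bigl(1-\sqrt{1-|q|^2}\bigr)\,d\mathcal{H}^{k+1},
\]
are genuine sufficient conditions. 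But neither is established: you acknowledge that the comparison of $V_T$ with $V_k$ is ``the real obstacle,'' and your proposed route through a \L ojasiewicz-type lower bound on $a(q)$ in terms of $\operatorname{dist}(q,\mathcal{P}_{n,k})$ is speculative. In particular, the crude bound $S_t\subseteq T$ discards the information that $a(q)>0$ on $T\setminus\mathcal{P}_{n,k}$, which is exactly what your final paragraph says one must recover; so the argument as written reduces the conjecture to another open inequality rather than proving it. This is a sensible outline of an attack, but it should be labelled as such, not as a proof.
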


\begin{conj}
    Let $n,k \in \mathbb{Z}^+$ such that $k \geq 2n$, then
    \[
    \lim_{k \rightarrow \infty} \frac{\mathcal{H}^{k+1} \left( \mathcal{P}_{n,k} \cap \overline{B}_{k+1} (0,1) \right)}{\mathcal{H}^{k+1}\left(\overline{B}_{k+1}(0,1)\right)} = 0.
    \]
\end{conj}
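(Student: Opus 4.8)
The plan is to realize the normalized volume as a solid angle and then bound that solid angle by relaxing the membership condition until it no longer involves $n$. For the first step, recall the elementary identity that, for any closed cone $K\subseteq\R^{k+1}$,
\[
\frac{\mathcal{H}^{k+1}\!\left(K\cap\overline{B}_{k+1}(0,1)\right)}{\mathcal{H}^{k+1}\!\left(\overline{B}_{k+1}(0,1)\right)}=\Pr\!\left[Z\in K\right],
\]
where $Z=(Z_0,\dots,Z_k)$ is a standard Gaussian vector in $\R^{k+1}$; this follows from polar coordinates, since $\lVert Z\rVert$ and $Z/\lVert Z\rVert$ are independent and $Z/\lVert Z\rVert$ is uniform on the sphere. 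Since $\mathcal{P}_{n,k}$ is a closed cone, identifying $Z$ with the coefficient vector of $p_Z(x)=\sum_{i=0}^{k}Z_ix^i$ reduces the conjecture to showing that $\Pr[p_Z\in\mathcal{P}_{n,k}]\to 0$ as $k\to\infty$.

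For the second step, relax the constraint so that it does not see $n$: since $xI_n\in\mathsf{M}_n^{\ge 0}$ and $p(xI_n)=p(x)I_n$ for every $x\ge 0$, each $p\in\mathcal{P}_n$ is nonnegative on $[0,\infty)$, hence in particular on $(0,1)$. Thus $\Pr[p_Z\in\mathcal{P}_{n,k}]\le\Pr[p_Z\ge 0\text{ on }(0,1)]$, and it suffices to show that a degree-$k$ Kac polynomial (one whose coefficients are i.i.d.\ standard Gaussians) is, with probability tending to $1$ as $k\to\infty$, negative somewhere on $(0,1)$.

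For the third step, pass to the limit through the associated random power series. Extend $Z$ to an i.i.d.\ sequence $(Z_i)_{i\ge 0}$ and set $f(x)=\sum_{i\ge 0}Z_ix^i$, which almost surely has radius of convergence exactly $1$; hence the partial sums $p_Z$ converge to $f$ uniformly on $[0,1-\delta]$ almost surely, and therefore in distribution in $C([0,1-\delta])$, for every $\delta\in(0,1)$. Since $\{g:g\ge 0\text{ on }[0,1-\delta]\}$ is closed in that space, the Portmanteau theorem gives $\limsup_{k}\Pr[p_Z\ge 0\text{ on }[0,1-\delta]]\le\Pr[f\ge 0\text{ on }[0,1-\delta]]$, and letting $\delta\downarrow 0$ and using continuity of measure, it remains only to show $\Pr[f\ge 0\text{ on }(0,1)]=0$. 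For this, take $x_j=1-c^{-2^{j}}$ with $c>1$, so $x_j\uparrow 1$; the correlation of $f(x_i)$ and $f(x_j)$ equals $\sqrt{(1-x_i^2)(1-x_j^2)}\big/(1-x_ix_j)$, which tends to $0$ rapidly enough that the events $A_j=\{f(x_j)<0\}$, each of probability $\tfrac12$, satisfy $\sum_{i\neq j}\lvert\mathrm{Cov}(\mathbf{1}_{A_i},\mathbf{1}_{A_j})\rvert<\infty$ (using the orthant identity $\Pr[f(x_i)<0,\,f(x_j)<0]=\tfrac14+\tfrac1{2\pi}\arcsin\rho_{ij}$). The Kochen--Stone form of the Borel--Cantelli lemma then forces $\Pr[A_j\text{ i.o.}]=1$, so almost surely $f(x_j)<0$ for some $j$, whence $f$ is not nonnegative on $(0,1)$. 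Combining the three steps proves the conjecture.

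The main obstacle is precisely this last probabilistic fact: that a high-degree random polynomial is negative somewhere on $(0,\infty)$ with probability tending to $1$. The soft argument above is enough, but one could instead invoke the much sharper persistence estimates of Dembo, Poonen, Shao, and Zeitouni (\emph{Random polynomials having few or no real zeros}, J.~Amer.~Math.~Soc.~\textbf{15} (2002)), which give $\Pr[\,p_Z\text{ has no zero in }(0,\infty)\,]=k^{-\Theta(1)}$. The only points needing care are the almost-sure uniform-on-compacts convergence of the partial sums (the radius of convergence is a.s.\ equal to $1$) and the covariance estimate for the $A_j$ (via $\arcsin\rho\le\tfrac{\pi}{2}\rho$ on $[0,1]$ together with the correlation decay); both are routine.
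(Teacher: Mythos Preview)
The statement you address is listed in the paper as a \emph{conjecture}; the authors give no proof, so there is nothing to compare your argument against. What you have submitted is therefore not a reproduction of the paper's reasoning but an actual resolution of an open problem they pose, and as far as I can see it is correct.

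All three steps hold up. The identification of the normalized volume with the Gaussian probability $\Pr[Z\in K]$ is standard for closed cones, and the containment $\mathcal{P}_{n,k}\subseteq\{p:p\ge 0\text{ on }[0,\infty)\}$ via $A=xI_n$ is immediate. The substantive work is your third step, and it is sound: coupling the degree-$k$ Kac polynomials as partial sums of a single Gaussian power series $f$ gives a.s.\ uniform convergence on each $[0,1-\delta]$, so Portmanteau applied to the closed set $\{g\ge 0\}\subset C([0,1-\delta])$ yields $\limsup_k\Pr[p_Z\ge 0\text{ on }[0,1-\delta]]\le\Pr[f\ge 0\text{ on }[0,1-\delta]]$, and letting $\delta\downarrow 0$ reduces everything to $\Pr[f\ge 0\text{ on }[0,1)]=0$. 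Your Kochen--Stone computation is also correct: with $x_j=1-c^{-2^{j}}$ and $i<j$ one has $\rho_{ij}\le 2\,c^{-(2^{j}-2^{i})/2}\le 2\,c^{-2^{\,j-2}}$, so $\sum_{i<j}\rho_{ij}<\infty$; since each $A_j=\{f(x_j)<0\}$ has probability $\tfrac12$, the Kochen--Stone ratio tends to $1$ and $\Pr[A_j\text{ i.o.}]=1$. The only cosmetic point is that your Portmanteau step uses $[0,1-\delta]$ rather than an interval avoiding $0$, but this is harmless because continuity gives $\{p_Z\ge 0\text{ on }(0,1)\}=\{p_Z\ge 0\text{ on }[0,1]\}\subseteq\{p_Z\ge 0\text{ on }[0,1-\delta]\}$. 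Your alternative citation of Dembo--Poonen--Shao--Zeitouni would give a quantitative rate, but the soft argument already suffices for the conjectured limit.
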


\begin{rem}
    We are not sure on the behavior of $\mathcal{H} \left( \mathcal{P}_{n} \cap  \overline{B}_{\infty}(0,1) \right)$. In infinite dimensions, we lose the compactness of the sphere. While $\mathcal{P}_n$ does not contain any infinite dimensional polynomials, it contains sequences of polynomials that go to infinite dimensions. To study this further it may be easier to consider the set of entire functions instead of polynomials as was done in \cite{bharali2008}.
\end{rem}

\section{Concluding remarks}

At the end of the previous section we have two conjectures. One potential avenue for solving these are with further study of $\mathcal{P}_{1,k}$. The characterization of $\mathcal{P}_1$ is known as the Pólya–Szegö theorem, see \cite[Proposition 2]{powers2000}, which asserts that $p \in \mathcal{P}_1$ if and only if 
\[
p(x) = f_1(x)^2 + f_2(x)^2 + x \left(g_1(x)^2 + g_2(x)^2 \right)
\]
where $f_1,f_2,g_1,g_2 \in \mathbb{R}[t]$. Following that $\mathcal{P}_n \subset \mathcal{P}_1$ this means that all polynomials in $\mathcal{P}_n$ can be written as sums of squares. 

Sums of squares tie in closely with quadratic forms. Determining an expression for the curve that extends out of the nonnegative orthant $\mathcal{P}_{n,2n}$ would be beneficial to characterizing $\mathcal{P}_n$. We offer the following conjecture for a potential avenue of determining that curve.

\begin{conj} \label{conj:characterizing_f}
    There exists a finite $k \in \mathbb{Z}^+$ where $f_k(\textbf{v}) \geq 0$ for $f_k \in \mathbb{R}^{2n}[x]$ implies the polynomial whose coefficients are $\textbf{v}$ is in $\mathcal{P}_{n,2n}$.
\end{conj}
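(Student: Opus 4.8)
The plan is to extract $f_k$ from the boundary of $\mathcal{P}_{n,2n}$ after first showing that this boundary is algebraic. Write $\mathbf{v}=(v_0,\dots,v_{2n})$ for the coefficient vector of a candidate polynomial $p_{\mathbf{v}}$. By Lemma~\ref{lem:positive_stochastic},
\[
\mathbf{v}\in\mathcal{P}_{n,2n}\iff \forall A\in\mathsf{M}_n^{+}\text{ stochastic},\ \forall\rho>0,\ \forall i,j:\ \sum_{k=0}^{2n}v_k\,\rho^k (A^k)_{ij}\ge 0,
\]
and the positive stochastic matrices form a semialgebraic subset of $\mathbb{R}^{n^2}$, cut out by $a_{ij}>0$ and $\sum_j a_{ij}=1$. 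The right-hand side is a universally quantified first-order formula over $\mathbb{R}$ whose atoms have degree $O(n)$ in the $O(n^2)$ variables $(\mathbf{v},A,\rho)$, so by the Tarski--Seidenberg theorem $\mathcal{P}_{n,2n}$ is semialgebraic; together with \cite{clark2022} and \cite{loewy2023} this makes $\mathcal{P}_{n,2n}$ a closed, convex, full-dimensional cone (it contains the nonnegative orthant of $\mathbb{R}^{2n+1}$) which is also pointed, since by \cite[Corollary 4.2 and Theorem 4.11]{clark2022} any line through the origin inside it must lie along $\pm x^n$, and $-x^n\notin\mathcal{P}_n$.

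It follows that $\partial\mathcal{P}_{n,2n}$ is semialgebraic of dimension $2n$, hence contained in a real hypersurface $V(g)\subset\mathbb{R}^{2n+1}$ with $g$ squarefree; an effective quantifier-elimination degree bound makes $\deg g$ a function of $n$, which already supplies the finite $k$ demanded by the conjecture. By \cite[Corollary 4.2 and Theorem 4.11]{clark2022} the coordinate hyperplanes $\{v_j=0\}$ for $j\in\{0,\dots,n-1\}\cup\{n+1,\dots,2n\}$ support facets of $\mathcal{P}_{n,2n}$ and therefore occur among the irreducible factors of $g$; stripping them off, write $g=\bigl(\prod_{j} v_j\bigr)\,f_k$, so that $V(f_k)$ is precisely the ``curve that extends out of the nonnegative orthant''. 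After fixing the sign of $f_k$ so that $f_k\ge 0$ on $\mathcal{P}_{n,2n}$, and using homogeneity to pass to the affine slice $v_n=-1$ --- an unbounded convex semialgebraic region whose only curved boundary piece is $V(f_k)$ --- the target reduces to showing that, for $\mathbf{v}$ with $v_j\ge 0$ at the $2n$ outer indices, $f_k(\mathbf{v})\ge 0$ forces $\mathbf{v}\in\mathcal{P}_{n,2n}$.

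The crux, and the step I expect to be genuinely hard, is controlling the global behavior of the real hypersurface $V(f_k)$: semialgebraicity and convexity yield $\partial\mathcal{P}_{n,2n}\subseteq V(g)$ but not that $V(f_k)$ stays inside the cone, and if a real branch of $V(f_k)$ re-enters the interior of the complement then $\{f_k\ge 0\}$ strictly contains $\mathcal{P}_{n,2n}$, exactly as $\{(x^2-1)(y^2-1)\ge 0\}$ strictly contains the square $[-1,1]^2$. To preclude this I would aim to show that the exposed curved part of $\partial\mathcal{P}_{n,2n}$ lies on a single irreducible component that does not escape the cone, using Lemma~\ref{lem:extremal_face} and the structure of exposed rays together with the sum-of-squares (hence spectrahedral) description of $\mathcal{P}_1\supseteq\mathcal{P}_n$ supplied by the P\'olya--Szeg\"o theorem \cite{powers2000}, whose boundary is governed by explicit determinantal polynomials; this last input may even produce $f_k$ directly as a minor of a localized moment matrix. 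The remaining difficulties --- irreducibility of that component, approachable through the complexification of the quantifier-elimination output, and the bookkeeping needed to absorb the $2n$ linear sign conditions into one polynomial inequality --- are comparatively minor, and the latter is in fact inessential to the conjecture as stated, which demands only sufficiency.
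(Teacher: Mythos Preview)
The statement you are attacking is stated in the paper as a \emph{conjecture}; the authors give no proof, only the restatement that it amounts to the boundary of $\mathcal{P}_{n,2n}$ being described by finitely many quadratic forms. There is therefore no paper proof to compare against, and your proposal has to stand on its own.

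Your Tarski--Seidenberg step is correct and does show that $\mathcal{P}_{n,2n}$ is a closed, convex, full-dimensional semialgebraic cone. But semialgebraicity is far weaker than what is being conjectured, and you acknowledge as much: the Zariski closure $V(g)$ of the boundary can have real sheets outside the cone, so that $\{f_k\ge 0\}$ strictly contains $\mathcal{P}_{n,2n}$, exactly as in your $(x^2-1)(y^2-1)$ example. Your proposed cure --- irreducibility of the curved boundary piece plus the spectrahedral description of $\mathcal{P}_1$ from P\'olya--Szeg\H{o} --- is not an argument but a hope. In particular the inclusion $\mathcal{P}_n\subset\mathcal{P}_1$ points the wrong way: the determinantal inequalities cutting out $\mathcal{P}_{1,2n}$ are \emph{necessary} for membership in $\mathcal{P}_{n,2n}$ but cannot be \emph{sufficient}, so they do not ``produce $f_k$ directly.'' More seriously, the authors' own restatement demands \emph{quadratic} forms, whereas quantifier elimination yields polynomials of degree doubly exponential in the ambient dimension; nothing in your outline touches this degree bound, which is the actual content of the conjecture. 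What you have written is an honest research program with its hardest step explicitly left open, not a proof.
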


Another way to state the above conjecture is that there are a finite number of quadratic forms that describe the boundary of $\mathcal{P}_{n,2n}$.

\section*{Acknowledgements}

We would like to thank Patrick C. Gambill for his ideas in section 4 and overall good vibes. We would also like to thank Kevin R. Vixie for his ideas in section 5.

\bibliographystyle{abbrv}
\bibliography{cone_properties}

\end{document}